\newcommand{\thickhline}{%
	\noalign {\ifnum 0=`}\fi \hrule height 1pt
	\futurelet \reserved@a \@xhline
}
\definecolor{darkblue}{rgb}{0.0,0,0.7} 
\definecolor{darkred}{rgb}{0.7,0,0} 
\newcommand{\darkred}{\color{darkred}} 
\newcommand{\defn}[1]{\emph{\darkred #1}}
\newtheorem{theorem}{Theorem}[section]
\newtheorem{prop}[theorem]{Proposition}
\newtheorem{lemma}[theorem]{Lemma}
\newtheorem{cor}[theorem]{Corollary}
\theoremstyle{definition}
\newtheorem{definition}[theorem]{Definition}
\newtheorem{rmq}[theorem]{Remark}
\newtheorem{exple}[theorem]{Example}
\newtheorem{question}[theorem]{Question}
\numberwithin{equation}{section}
\author{Thomas Gobet}
\address{Université Clermont Auvergne, LMBP, UMR 6620 (CNRS), Campus des Cézeaux, 3 place Vasarely, TSA 60026, CS 60026, 63178 Aubière cedex, France}
\title[Faithful Burau-like representations of some rank two Garside groups]{Faithful Burau-like representations of some rank two Garside groups and torus knot groups}
\begin{document}
	
\maketitle	
	
\begin{abstract}
We give a method to produce faithful representations of the groups $G(n,m)=\langle X, Y \ \vert \ X^m = Y^n \rangle$ in $\mathrm{GL}_2(\mathbb{C}[t^{\pm 1}, q^{\pm 1}])$. These groups are Garside groups and the Garside normal forms of elements of the corresponding monoid can be explicitly recovered from the matrices, in the spirit of Krammer's proof of the linearity of Artin's braid groups. We use this method to construct several explicit faithful representations of the above groups, among which a representation which generalizes the reduced Burau representation of $B_3 \cong G(2,3)$ to a large family of groups of the form $G(n,m)$ with $n$, $m$ coprime (which are torus knot groups). Like the Burau representation, this representation specializes to a representation of a reflection-like quotient that we previously introduced, called \textit{$2$-toric reflection group}. As a byproduct we get a "Burau representation" for some exceptional complex braid groups, which also shows that the  latter embed into their Hecke algebra.    
\end{abstract}	

\tableofcontents

\section{Introduction} 

The aim of this paper is to give some more evidence that torus knot groups behave like Artin groups (or more generally complex braid groups) of rank two. The most basic observation that can be done is that the $3$-strand Artin braid group $B_3$ is isomorphic to the knot group of the trefoil knot, the most elementary nontrivial (torus) knot, and that this isomorphism maps braided reflections to (powers of) meridians. More generally, Artin groups of odd dihedral type are isomorphic to torus knot groups, and the complex braid groups of the exceptional complex $2$-reflection groups $G_{12}$ and $G_{22}$ as well. Both families are distinct, but share several group-theoretic properties: for instance, it has long been known that, like Artin's braid group, torus knot groups are torsion free, with an infinite cyclic center~\cite{Schreier}, and linear. In a more modern language, it has been observed that both turn out to be \textit{Garside groups}~\cite{DP}, and form two of the most elementary, though already extremely rich, families of such groups.  

Artin groups admit a natural quotient given by the corresponding Coxeter group. In a previous work~\cite{Gobet_toric}, we showed that one can define similar quotients for torus knot groups, and that these quotients behave like (in general infinite) complex reflection groups of rank two. Such a quotient is called a~\textit{toric reflection group}, and the corresponding torus knot group behaves like its braid group. 

\medskip

In this paper, we go one step further by showing in Subsection~\ref{sub_burau_torus} that for a large family of torus knot groups, we can define an analogue of the \textit{(reduced) Burau representation} of $B_3$. 

The Burau representation~\cite{Bur} is perhaps one of the most fascinating representation of Artin's braid group. It is obtained by deforming the natural action of the corresponding Coxeter group, namely the symmetric group $S_n$, on its root system. For $n\geq 5 $ it is unfaithful~\cite{Bigelow}, for $n=4$ the faithfulness is still open, while for $n=3$ it is faithful (see~\cite[Theorem 3.15]{Bir} and the references therein). There is also a definition for all Artin groups of rank two, and it is faithful in this case~\cite{Squier, LX}. Similarly, the representation that we construct will be faithful, as expected for a family of groups that we claim to behave like Artin groups (or complex braid groups) of rank two. 

\medskip

Let $n,m \geq 2$ and let $$G(n,m)= \langle X, Y \ \vert \ X^m = Y^n \rangle.$$ When $n$ and $m$ are coprime, this is a presentation of the knot group of the torus knot $T_{n,m}$ (every torus knot, except the unknot, is isotopic to some $T_{n,m}$).  

 The following statement will be our main tool to construct faithful representations of the groups $G(n,m)$, and can be used without the assumption that $n$ and $m$ are coprime.

\begin{prop}\label{fund}
Assume that $m, n \geq 2$ are two (not necessarily coprime) integers, let $M_X, M_Y\in\mathrm{GL}_2(\mathbb{C}[t^{\pm 1}, q^{\pm 1}])$ satisfy the following three conditions: 
\begin{enumerate}
	\item We have $M_X^m = M_Y^n$, i.e., the matrices define a representation of $G(n,m)$,
	\item For all $1 \leq k \leq m-1$, the matrix $M_X^k$ has the form $$t^{nk} \begin{pmatrix} c_1(k) & c_2(k) \\ c_3(k) & c_4(k)\end{pmatrix},$$ where $c_i(k)\in \mathbb{C}$ for all $i$, and $c_2(k)\neq 0$, 
	\item For all $1 \leq k \leq n-1$, the matrix $M_Y^k$ has the form $$t^{mk} \begin{pmatrix} P_1(k) & P_2(k) \\ P_3(k) & P_4(k)\end{pmatrix},$$ where $P_i(k) \in\mathbb{C}[q^{\pm 1}]$ for all $i$, $\deg(P_i(k))\leq 0$ for $i\neq 3$, and $\deg(P_3(k))=1$. 
\end{enumerate}
Then the assignment $X \mapsto M_X$, $Y \mapsto M_Y$ defines a faithful representation of $G(n,m)$. Moreover, the representation obtained by setting $t=q$ stays faithful. 
\end{prop}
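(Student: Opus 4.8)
The plan is to exploit the Garside structure of $G(n,m)$ together with a $t$-degree/$q$-degree bookkeeping modelled on Krammer's argument. First I would record the basic shape of the matrices: conditions (2) and (3) for $k=1$ say that $M_X=t^nC$ with $C\in\mathrm{GL}_2(\mathbb{C})$ and $M_Y=t^mP$ with $P\in\mathrm{GL}_2(\mathbb{C}[q^{\pm1}])$, and then $M_X^k=t^{nk}C^k$, $M_Y^k=t^{mk}P^k$, so that (2) reads $(C^k)_{12}\neq0$ for $1\le k\le m-1$ and (3) fixes the $q$-degree pattern of $P^k$ (entry $(2,1)$ of $q$-degree exactly $1$, the other entries of $q$-degree $\le0$). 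Relation (1) gives $C^m=P^n=:Z_0\in\mathrm{GL}_2(\mathbb{C})$, a constant matrix, central in the image since $\Delta:=X^m=Y^n$ is central in $G(n,m)$. Recall that $G(n,m)$ is Garside with central Garside element $\Delta$ and $G(n,m)/\langle\Delta\rangle\cong\mathbb{Z}/m\mathbb{Z}*\mathbb{Z}/n\mathbb{Z}$, so every element has a unique expression $g=\Delta^c w$ with $c\in\mathbb{Z}$ and $w=s_1\cdots s_\ell$ a reduced alternating word whose syllables are $X^{a}$ ($1\le a\le m-1$) and $Y^{b}$ ($1\le b\le n-1$). Writing $\deg\colon G(n,m)\to\mathbb{Z}$, $X\mapsto n$, $Y\mapsto m$ (well defined since $X^m,Y^n\mapsto nm$), the product telescopes to
\[
\rho(g)=t^{\deg g}\,Z_0^{\,c}\,Q_w,
\]
where $Q_w\in\mathrm{GL}_2(\mathbb{C}[q^{\pm1}])$ is the alternating product of the matrices $C^{a_i}$ and $P^{b_i}$ and carries no $t$. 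Thus $\rho(g)=I$ forces $\deg g=0$ and $Z_0^{\,c}Q_w=I$.

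The key step, and the main obstacle, is a degree lemma for $Q_w$, proved by induction on $\ell$ by appending one syllable on the right. Let $u(w)$ be the number of $Y$-syllables of $w$. I claim that every entry of $Q_w$ has $q$-degree $\le u(w)$, and that the ``marked'' entry in position $(i_0,j_0)$ --- with $i_0=1$ or $2$ according as $s_1$ is an $X$- or $Y$-syllable, and $j_0=2$ or $1$ according as $s_\ell$ is an $X$- or $Y$-syllable --- has $q$-degree exactly $u(w)$ with nonzero leading coefficient, with the refinement that when $s_\ell$ is a $Y$-syllable the other entry of row $i_0$ has $q$-degree $\le u(w)-1$. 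Right-multiplication preserves the row index, which is why $i_0$ is determined by $s_1$. The two induction steps are: appending a $Y$-syllable $P^b$ (to a word ending in $X$, so the marked entry sits in column $2$) moves the degree-$u$ weight into column $1$ through the degree-$1$ entry $(P^b)_{21}$, raising the degree to $u+1$; appending an $X$-syllable $C^a$ (to a word ending in $Y$, marked column $1$) moves it into column $2$ through the nonzero entry $(C^a)_{12}$, keeping the degree. In each case the competing term has strictly smaller degree --- for the $Y$-step by the crude bound $\le u$ on all entries, for the $X$-step precisely because of the refinement clause --- so no cancellation occurs and the marked leading coefficient stays nonzero. This is exactly where the conditions $(C^k)_{12}\neq0$ and $\deg_q(P^k)_{21}=1$ are used, and controlling this non-cancellation is the crux of the argument.

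Granting the lemma, faithfulness of $\rho$ follows quickly. If $\rho(g)=I$ then $Z_0^{\,c}Q_w=I$, so $Q_w=Z_0^{-c}$ is a constant matrix; by the lemma (trivially if $w=1$) its maximal $q$-degree $u(w)$ must be $0$, whence $w$ has no $Y$-syllable and, being reduced and alternating, is either $1$ or a single syllable $X^{a}$ with $1\le a\le m-1$. But $\deg g=0$ reads $nmc+\deg w=0$; if $w=X^a$ this gives $mc+a=0$, impossible for $1\le a\le m-1$, so $w=1$ and then $c=0$, i.e.\ $g=1$.

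Finally, for the specialization $t=q$ I would argue through the determinant. Since $\det M_X=t^{2n}\det C$ and $\det M_Y=t^{2m}\det P$ with $\det P\in\mathbb{C}[q^{\pm1}]^\times$ a monomial $\gamma q^e$, the fact that $Z_0=C^m$ is a constant matrix makes $\det Z_0=(\det P)^n=\gamma^nq^{en}$ constant, forcing $e=0$; hence every $\det\rho(g)$ equals $\lambda_g\,t^{2\deg g}$ with $\lambda_g\in\mathbb{C}^\times$ and no $q$. Consequently $\det\bigl(\rho(g)|_{t=q}\bigr)=\lambda_g q^{2\deg g}$ has $q$-degree exactly $2\deg g$, so the specialized matrix already determines $\deg g$; dividing by $q^{\deg g}$ then recovers $Z_0^{\,c}Q_w$, and the pair $(\deg g,\,Z_0^{\,c}Q_w)$ determines $g$ by the injectivity established above. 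Therefore $\rho|_{t=q}$ is faithful as well.
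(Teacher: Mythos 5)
Your proof is correct, and its engine --- the inductive control of $q$-degrees of the entries of alternating products, with a ``marked'' entry whose row is set by the first syllable and whose column is set by the last, plus a non-cancellation refinement --- is the same four-case bookkeeping as the paper's key Proposition~\ref{prop_read} (there stated for monoid elements not left-divisible by $\Delta$ and proved by \emph{prepending} Garside factors on the left; your right-multiplication version is its mirror image, and your refinement clause corresponds exactly to the strict inequalities in Situations~1 and~3 of Figure~\ref{tabular}). Where you genuinely diverge is in the surrounding structure. The paper follows Krammer's scheme literally: it reduces faithfulness on $G(n,m)$ to faithfulness on the Garside monoid $\mathcal{M}(n,m)$ via the group-of-fractions property, and then, given two monoid elements with equal matrices, cancels their common left-divisors and uses the readability of the first Garside letter (Corollary~\ref{cor_read}) to contradict the absence of a common left-divisor; this consumes Garside-theoretic inputs (cancellativity, divisibility) but also packages, as a byproduct, an explicit algorithm recovering the Garside normal form from the matrix, which the paper exploits afterwards. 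You instead bypass the monoid entirely, using the central-extension decomposition $g=\Delta^c w$ with $w$ an alternating word --- note that only \emph{existence} of this decomposition is needed, not the uniqueness you assert, and existence is elementary from centrality of $\Delta$ and reduction modulo $X^m=Y^n=1$ in $\mathbb{Z}/m\mathbb{Z}*\mathbb{Z}/n\mathbb{Z}$ --- and you conclude by a trivial-kernel argument, which makes the proof self-contained modulo classical facts. Finally, the two treatments of the $t=q$ specialization run in opposite directions: the paper proves the specialized case directly (observing that specialization only twists all entries by a common power of $q$, so the degree table survives) and deduces the generic case from it, whereas you prove the generic case and reduce the specialized one to it via the clean observation that $\det\rho(g)=\lambda_g t^{2\deg g}$ carries no $q$ (since $\det P$ is constant, its $n$-th power being the constant $\det Z_0$), so the specialized matrix still determines $\deg g$ and hence $Z_0^{c}Q_w$. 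Both routes are valid; yours is arguably more economical, while the paper's yields the normal-form-reading algorithm in the specialized representation as well.
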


It might look difficult at first to find matrices satisfying the above conditions, but as we shall see in Section~\ref{const_rep} below, several such representations can naturally be constructed. 

The main strategy for the proof of this proposition will be to show that one can recover the Garside normal form of an element $u$ of the Garside monoid $\mathcal{M}(n,m) = \langle X, Y \ \vert \ X^m=Y^n \rangle \hookrightarrow G(n,m)$ from its matrix $M_u$ in the representation. This follows Krammer's philosophy, who used this approach to show that braid groups are linear~\cite{Krammer}--see Section~\ref{sub_phil} below for more details. Krammer's arguments are indeed purely Garside-theoretic, but it seems that his strategy has not been applied outside the particular family of Garside groups given by Artin groups of spherical type; to the best of our knowledge the above proposition gives a first example of another family of Garside groups for which one can use Krammer's approach.   

Proposition~\ref{fund} will be proven in Section~\ref{sec_proof}, after recalling a few basic properties of Garside groups in Section~\ref{sec_prelim}; in fact, the Garside-theoretic properties of $G(n,m)$ are not required for the proof since they are in fact reproven when proving Proposition~\ref{fund}, but having them in mind may be enlightening. In Section~\ref{const_rep}, we then construct various faithful representations of the groups $G(n,m)$ using Proposition~\ref{fund}, including in Subsection~\ref{sub_burau_torus} the aforementioned representation generalizing the Burau representation, which we also show to be unitarizable. The last Section~\ref{hecke} is devoted to showing that for those torus knot groups for which we can build a Burau representation, the faithfulness can be used to show that these groups embed into an analogue of Hecke algebra. In particular this will yield a faithful Burau representation for the complex braid groups of the exceptional complex reflection groups $G_{12}$ and $G_{22}$ (see Examples~\ref{g12} and~\ref{g22}).  

\medskip

\textbf{Acknowledgements} The idea to construct Burau representations for torus knot groups grew up in February 2022 while I was attending the semester program \textit{Braids} at ICERM (Providence). I thank ICERM and the organizers for the invitation and financial support. I also thank Eddy Godelle, Igor Haladjian, Abel Lacabanne, Ivan Marin, Hoel Queffelec, and Emmanuel Wagner for useful discussions.    

\section{Preliminaries}\label{sec_prelim}

In this paper, we shall be interested in representations of the following groups: 
\begin{definition}
Let $m, n \geq 2$ be two integers. We denote by $G(n,m)$ the group defined by the presentation $\langle X, Y \ \vert \ X^m = Y^n \rangle$. 	
\end{definition}

\subsection{Garside groups and their representations}\label{sub_phil} By~\cite{DP}, the groups $G(n,m)$ are Garside groups. We will not recall all the theory of Garside monoids and groups here, but only some properties that we will use later on. For the interested reader, more on Garside structures can be found in~\cite{Garside, DP}; in fact, as explained in Remark~\ref{rmq_gars} below, the fact that $G(n,m)$ is a Garside group is in fact not needed in this paper, but our approach rather reproves some important properties that $G(n,m)$ admits as a Garside group. 

Roughly speaking, saying that $G(n,m)$ is a Garside group means that $G(n,m)$ is the group of fractions of the monoid $\mathcal{M}(n,m)$ defined by the same presentation (which we will call a \textit{Garside presentation}), that the latter is cancellative and has good divisibility properties, and that there exists an element $\Delta$ (the \textit{Garside element}) whose set of left- and right-divisors coincide, and form a finite generating set $\mathrm{Div}(\Delta)$ of $\mathcal{M}(n,m)$ (and hence of $G(n,m)$), called the set of \textit{simple elements} of $\mathcal{M}(n,m)$. In our case the Garside element is given by $X^m = Y^n$, and is central (a Garside element always has a central power, and since Garside groups are torsion-free, a Garside group in particular has a non-trivial center). In this case, it is even a generator of the center, as shown by Schreier~\cite{Schreier}. 

In a Garside group $G$ attached to a Garside monoid $\mathcal{M}$, every element $g\in G$ can thus be written under the form $x^{-1}y$, where $x, y\in M$. Garside groups have a solvable word problem and every element admits canonical left- and right-normal forms, whose factors are elements of the set $\mathrm{Div}(\Delta)$. In fact, given any element $x\in \mathcal{M}(n,m)\backslash\{1\}$, the good divisibility properties of $\mathcal{M}(n,m)$ ensure that there is a unique element $x_1$ in $\mathrm{Div}(\Delta)\backslash\{1\}$, maximal for left-divisibility, that can be factored out from $x$. We thus have $x=x_1 x'$ for some (by cancellativity uniquely-defined) element $x'\in M$, and one can go on factoring $x'$. The properties of Garside monoids guarantee that this procedure terminates, yielding a a canonical decomposition $x=x_1 x_2 \cdots x_k$ with $x_i \in \mathrm{Div}(\Delta) \backslash\{1\}$, the \textit{left-Garside normal form} of $x$. Factoring out from the right yields the \textit{right-Garside normal form}. In our case we have $\mathrm{Div}(\Delta)=\{1, X, X^2, \dots, X^{m-1}, Y, Y^2, \dots, Y^{n-1}\}$.

We shall construct two-dimensional representations of $G(n,m)$, and show that they are faithful using the following procedure, which is the philosophy used by Krammer to show that Artin's $n$-strand braid groups are linear~ \cite{Krammer}: 
\begin{enumerate}
	\item To show that an action of a Garside group $G$ (with Garside monoid $\mathcal{M}$) on a set $X$ is faithful, since every element of $G$ can be written under the form $x y^{-1}$ with $x, y\in \mathcal{M}$, it suffices to show that the monoid $\mathcal{M}$ acts faithfully. 
	\item To show that the action of $\mathcal{M}$ is faithful, one can try to reconstruct the left- of -right Garside normal form of every element $m\in \mathcal{M}$ from its corresponding automorphism of $X$.  
\end{enumerate}
This approach has been successfully applied to show that several (linear or categorical) actions of braid groups or more generally spherical type Artin-Tits groups are faithful, see for instance~\cite{Digne, LX, BT, Jensen, LQ}. Since Krammer's approach is Garside-theoretic, it is natural to wonder whether it can be applied for other families of Garside groups. To the best of our knowledge, this has not been investigated before, and we shall do it below for the family of groups $G(n,m)$, which is among the easiest family of Garside groups distinct from spherical type Artin-Tits groups.   
  
\begin{rmq}\label{rmq_gars} The fact that $G(n,m)$ is a Garside group is in fact not needed \textit{a priori} in this approach, which in fact reproves these facts: the point $2$ in the above philosophy implies that $\mathcal{M}(n,m)$ embeds into a group (the group of automorphisms of $X$), hence that it is cancellative (a property that is hard to show in general for a monoid defined by generators and relations). The fact that every element of $G(n,m)$ can we written as a fraction in two elements of $\mathcal{M}(n,m)$ can also be established easily in this case using the fact that if $x, y\in \mathrm{Div}(\Delta)$, then writing $x x'= yy' = \Delta$, we have $x^{-1} y= x' {y'}^{-1}$, hence in any word in $\mathrm{Div}(\Delta)$ (recall that $X, Y\in \mathrm{Div}(\Delta)$), negative powers can be inductively moved to the left of the word. It is thus worth mentioning that the above philosophy can in fact also be used to show some properties of $G$ and $\mathcal{M}$, such as the cancellativity of $\mathcal{M}$. This was used for instance by Paris to show that Artin monoids, which in general are not Garside, embed in the corresponding groups~\cite{Paris}.     
\end{rmq}

\subsection{Properties of the groups under consideration}

As already pointed out, the groups $G(n,m)$ form a family of Garside groups that is among the easiest. Let us point out some additional features: 
\begin{itemize}
	\item If $n=2$ and $m$ is odd, then $G(n,m)$ is isomorphic to the Artin group of dihedral type $I_2(m)$.
	\item If $(n,m)=(3,4)$, respectively $(3,5)$, then $G(n,m)$ is isomorphic to the complex braid group of the exceptional complex reflection group $G_{12}$, respectively $G_{22}$ (see~\cite{Bannai}).
	\item If $n, m$ are coprime, which in particular covers all the examples above, then $G(n,m)$ is the torus knot group of the torus knot $T_{n,m}$. Since every torus knot (except the unknot) is of this form, it covers all the torus knots apart from the knot group of the unknot, which is isomorphic to $\mathbb{Z}$. 
\end{itemize}
In the case where $n$ and $m$ are coprime, since $G(2,3)$ is isomorphic to the three-strand braid group $B_3$, the groups $G(n,m)$ can be considered as generalizations of $B_3$. Note that the presentation $\langle X, Y \ \vert \ X^3 = Y^2 \rangle$ is \textit{not} the standard Artin presentation of $B_3$. The standard Artin presentation of $B_3$ can in fact be generalized when $n$ and $m$ are coprime as we now recall, and this suggests to investigate whether properties of $B_3$ can actually be generalized to the groups of the form $G(n,m)$ with $n$ and $m$ coprime. Previous work of the author~\cite{Gobet_toric} introduced and classified analogues of the complex reflection groups appearing as quotients of $B_3$ when adding torsion on the generators of the standard presentation, and studied Garside structures analogous to those that are known for $B_3$~\cite{Gobet_new}. Note that Haladjian recently generalized some of these results to a larger family of Garside groups, in such a way that they include all the complex braid groups of complex reflection groups of rank two~\cite{H1, H2}. In this paper we continue this program by taking a more representation-theoretic road: will see that for most groups $G(n,m)$ with $n$ and $m$ coprime, we can construct a representation generalizing the Burau representation of $B_3$, and show that it is faithful using the philosophy recalled in Subsection~\ref{sub_phil}. This will be done by proving Proposition~\ref{fund}--which in fact will allow us to produce faithful representations for all the groups of the form $G(n,m)$, that is, not only when $n$ and $m$ coprime (see Section~\ref{sub_faith_gen} below).

\subsection{Case where the parameters are coprime} Let $n,m \geq 2$ with $n$ and $m$ coprime. The group $G(n,m)$ admits the presentation 
\begin{equation}\label{pres_2}\langle \ x_1, x_2, \dots, x_n \ \vert\ 
	\underbrace{x_1 x_2 \cdots}_{m~\text{factors}} = \underbrace{x_2 x_3\cdots}_{m~\text{factors}} = \dots = \underbrace{x_n x_1 \cdots}_{m~\text{factors}}
	\ \rangle,\end{equation} where indices are taken modulo $n$ if $n<m$.
Since $G(n,m)\cong G(m,n)$, it also admits the presentation \begin{equation}\label{pres_3}\langle \ y_1, y_2, \dots, y_m \ \vert\ 
	\underbrace{y_1 y_2 \cdots}_{n~\text{factors}} = \underbrace{y_2 y_3\cdots}_{n~\text{factors}} = \dots = \underbrace{y_m y_1 \cdots}_{n~\text{factors}}
	\ \rangle.\end{equation}     
For $B_3 \cong G(2,3)$ (more generally for the Artin group of dihedral type $I_2(m)$ with $m$ odd, which is isomorphic to $G(2,m)$), Presentation~\ref{pres_2} is nothing but the classical Artin-Tits group presentation, while Presentation~\ref{pres_3} is nothing but its Birman-Ko-Lee or dual presentation. An explicit isomorphism between $G(n,m)$ and the group with Presentation~\ref{pres_2} is given by $X \mapsto x_1 x_2 \cdots x_n$, $Y \mapsto x_1 x_2 \cdots x_m$. We refer to~\cite[Section 3]{Gobet_new} for more details and proofs of these facts. The above two presentations are also Garside presentations. 

\begin{definition}[{\cite[Section 2.2]{Gobet_toric}}]
Let $n,m\geq 2$ be coprime and $k \geq 2$. The \defn{toric reflection group} $W(k,n,m)$ is the quotient of $G(n,m)$ obtained by adding the relations $x_i^k = 1$ in Presentation~\ref{pres_2} for all $i=1, \dots, n$. 
\end{definition} 

In fact, since $n$ and $m$ are coprime, all the $x_i$'s are conjugate in $G(n,m)$. It thus suffices to add the relation $x_1^k=1$. These groups behave like reflection groups in some sense, and the corresponding torus knot groups like their braid groups, generalizing the quotient $B_3 \twoheadrightarrow S_3$ (see~\cite{Gobet_toric}).

\begin{definition}\label{2toric} Let $n,m$ be as above. A group of the form $W(2,n,m)$ will be called a \defn{$2$-toric reflection group}.
\end{definition}

\begin{rmq}\label{rmq_mer} Let $a, b \in \mathbb{Z}$ such that $an - bm = 1$. Using the above isomorphism between $G(n,m)$ and the group with presentation~\ref{pres_2}, we see that $Y^b X^{-a}$ is mapped to $x_i^{\pm 1}$ for some $i$. Hence since all the $x_i$'s are conjugate, to get a presentation of $W(k,n,m)$ from the presentation $\langle X, Y \ \vert \ X^m = Y^n \rangle$, it suffices to add the relation $(Y^b X^{-a})^k =1$. This will be needed later on in Subsection~\ref{sub_burau_torus}. In fact, when $n$ and $m$ are coprime, the group $G(n,m)$ is isomorphic to the fundamental group of the complement of the torus knot $T_{n,m}$ in $S^3$, and $Y^b X^{-a}$ is a meridian in this group (see~\cite[Proposition~3.38(b)]{knots}).
\end{rmq}  
 
\section{Proof of Proposition~\ref{fund}}\label{sec_proof}

\subsection{Preliminary observations}

Let $n,m \geq 2$. Recall from Subsection~\ref{sub_phil} that the presentation $\langle X, Y \ \vert \ X^m = Y^n \rangle$ defines a Garside structure on $G(n,m)$. Denote $\mathcal{M}=\mathcal{M}(n,m)$ the corresponding Garside monoid with $\Delta= X^m = Y^n$. Let $u\in \mathcal{M}$ be an element such that $u\neq 1$ and $u$ does not admit $\Delta= X^m = Y^n$ as a left- (equivalently right-) divisor and denote by $\mathcal{M}_{\Delta\not\leq}$ the set of such elements of $\mathcal{M}$. Then $u$ can be uniquely written in the form $$Y^{n_1} X^{m_1} Y^{n_2} X^{m_2} \cdots Y^{n_k} X^{m_k}$$ where $k\geq 1$, $1 \leq m_i \leq m-1$ for all $i=1, \dots, k-1$, $0 \leq m_k \leq m-1$, $1 \leq n_i \leq n - 1$ for all $i=2, \dots, k$, $0 \leq n_1 \leq n-1$. We denote by $L(u)$ the first letter of the above word, and by $R(u)$ its last letter. Every factor $X^{m_i}$, $Y^{n_i}$ is a simple element (in fact, their concatenation as above yields both the left- and right-Garside normal form of $u$); we call such factors the \textit{Garside factors} of $u$. Let $\ell(u)$ denote the \textit{Garside length} of $u$, that is, $$\ell(u)= 2(k-1) + {\bf{1}}_{n_1 \neq 0} + {\bf{1}}_{m_k \neq 0}.$$
We denote by $\ell_w(u)$ its \textit{weighted length}, that is, the integer $Mn+Nm$, where $M=\sum m_i$ and $N= \sum n_i$. For later use, for an arbitrary $x\in \mathcal{M}$, one can define $\ell_w(x)$ by choosing a word for $x$, counting the number $M$ (respectively $N$) of occurrences of $X$ (resp. of $Y$), and setting $\ell_w(x) = Mn+ Nm$; the relation $X^m = Y^n$ guarantees that it is well-defined. Given a Laurent polynomial $P\in\mathbb{C}[t^{\pm 1}, q^{\pm 1}]$, denote by $d_q(P)$ its degree in $q$.

To show that the representation $X \mapsto M_X$, $Y \mapsto M_Y$ from Proposition~\ref{fund} is faithful, we first establish the following:

\begin{prop}\label{prop_read}
	Assume that $M_X, M_Y$ satisfy the assumptions of Proposition~\ref{fund}. In particular they define a representation $\rho : G(n,m)\longrightarrow \mathrm{GL}_2(\mathbb{C}[t^{\pm 1}, q^{\pm 1}])$. Let $u\in \mathcal{M}_{\Delta\not\leq}$ and let $M_u:=\rho(u)= \begin{pmatrix} a_{11} & a_{12} \\ a_{21} & a_{22} \end{pmatrix}$. Let $n_q(u)= (k-1) + \mathbf{1}_{n_1 \neq 0}$. Then the following holds: 
	\begin{enumerate}
		\item If $n_1 \neq 0$ and $m_k=0$, then $d_q(a_{21})= n_q(u)$ and $d_q(a_{ij}) < n_q(u)$ for all $(i,j) \neq (2,1)$. 
		\item If $n_1 \neq 0$ and $m_k \neq 0$, then $d_q(a_{22})= n_q(u)$, $d_q(a_{21}) \leq n_q(u)$ and $d_q(a_{ij}) < n_q(u)$ for all $(i,j) \neq (2,2), (2,1)$.
		\item If $n_1= 0$ and $m_k= 0$, then $d_q(a_{11})= n_q(u)$, $d_q(a_{21}) \leq n_q(u)$, and $d_q(a_{12}), d_q(a_{22}) < n_q(u)$.
		\item If $n_1= 0$ and $m_k\neq 0$, then $d_q(a_{12})= n_q(u)$ and $d_q(a_{ij}) \leq n_q(u)$ for all $(i,j) \neq (1,2)$.
	\end{enumerate}  
In particular, the highest degree in $q$ of the coefficients of $M_u$ is equal to $n_q(u)$. 
\end{prop}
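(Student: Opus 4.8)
The plan is to read off the degree profile of $M_u$ by induction on the number of Garside factors of $u$ (equivalently on $\ell(u)$), building the word $u = Y^{n_1}X^{m_1}\cdots Y^{n_k}X^{m_k}$ one factor at a time from the left and multiplying the matrices accordingly. The first observation is that $n_q(u) = (k-1) + \mathbf{1}_{n_1\neq 0}$ is exactly the number of nonzero $Y$-factors occurring in $u$, namely $Y^{n_2},\dots,Y^{n_k}$ together with $Y^{n_1}$ when $n_1 \neq 0$. Since $M_X^{m_i}$ carries a scalar $t^{nm_i}$ and $M_Y^{n_i}$ a scalar $t^{mn_i}$, these powers of $t$ factor out of the whole product and play no role in the $q$-degrees; I would record this once and then work only with the $\mathbb{C}[q^{\pm1}]$-matrices obtained after dividing out the global power of $t$. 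Note also that every left prefix of $u$ is again an element of $\mathcal{M}_{\Delta\not\leq}$ in its own normal form, so the inductive hypothesis applies to each intermediate word.

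Next I would isolate the ``atomic'' degree profiles coming from conditions (2) and (3) of Proposition~\ref{fund}. Writing $A := M_X^{m_i} = (\alpha_{ij})$ for $1\le m_i\le m-1$ and $B := M_Y^{n_i} = (\beta_{ij})$ for $1\le n_i\le n-1$, condition (2) says that $A$ has scalar entries with its $(1,2)$-entry $\alpha_{12}$ nonzero, while condition (3) says that $B$ has $d_q(\beta_{21}) = 1$ and $d_q(\beta_{ij}) \le 0$ in every other position. The two base cases $N=1$ are then immediate: $u=Y^{n_1}$ gives $M_u=B$, which satisfies case (1) with $n_q(u)=1$, and $u=X^{m_1}$ gives $M_u=A$, which satisfies case (4) with $n_q(u)=0$.

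For the inductive step I would append one Garside factor on the right, computing $M_u\cdot A$ or $M_u\cdot B$. Because factors alternate between $X$-powers and $Y$-powers, there are exactly four transitions to check, and each shifts the case predictably while leaving $n_q$ unchanged when an $X$-factor is appended or raising it by one when a $Y$-factor is appended: appending $A$ sends case (1) to case (2) and case (3) to case (4), while appending $B$ sends case (2) to case (1) and case (4) to case (3). In each case the guaranteed maximal-degree entry of the product is produced either by $\alpha_{12}$ multiplying the distinguished maximal-degree entry of $M_u$ (when appending $A$) or by $\beta_{21}$ multiplying it (when appending $B$). The bookkeeping then shows that the distinguished position moves so that its row is governed by the first letter of $u$ (row $2$ if $u$ begins with $Y$, row $1$ if with $X$) and its column by the last letter (column $1$ if $u$ ends with $Y$, column $2$ if with $X$), matching the four cases, and that the remaining entries obey the stated strict or non-strict bounds; the final assertion $d_q(M_u)=n_q(u)$ then drops out, since every entry is bounded by $n_q(u)$ and one attains it.

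The one point requiring genuine care is the absence of cancellation in the leading term: when I assert that an entry of the product has $d_q$ \emph{exactly} $n_q(u)$ rather than smaller, I must ensure the top-degree contribution is not killed. This is where the alternating structure pays off, since in each of the four transitions the monomial of maximal $q$-degree comes from a \emph{single} product term---$a_{21}\alpha_{12}$, $a_{11}\alpha_{12}$, $a_{22}\beta_{21}$, or $a_{12}\beta_{21}$ respectively---whose leading coefficient is a nonzero scalar ($\alpha_{12}$, or the leading coefficient of $\beta_{21}$) times the nonzero leading coefficient carried by the distinguished entry of $M_u$, the competing term having strictly smaller degree by the inductive hypothesis. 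I expect this no-cancellation verification, together with keeping the four cases straight, to be the main (though entirely routine) obstacle.
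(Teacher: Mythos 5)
Your proposal is correct and is essentially the paper's own argument: an induction on the number of Garside factors, with the same four-case degree bookkeeping and the same no-cancellation observation that the leading term comes from a single product involving a coefficient ($c_2(k)$ or the degree-one entry $P_3(k)$) that cannot vanish. The only cosmetic difference is that you append factors on the right (computing $M_u M_\alpha$), whereas the paper prepends them on the left (computing $M_\alpha M_u$), so your four transitions $1\to 2$, $3\to 4$, $2\to 1$, $4\to 3$ are the mirror images of the paper's transitions $1\to 3$, $2\to 4$, $3\to 1$, $4\to 2$; both versions verify identically.
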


We will symbolize the four situations occuring in the above proposition as follows:

\begin{center}
	\begin{figure}
\begin{tabular}{|c|c|c|c|}
	\hline
	Situation & $M_u$ & $L(u)$ & $R(u)$ \\ 
	\hline
1 & $\begin{pmatrix} < & < \\ = & < \end{pmatrix}$ & Y & Y \\
\hline
2 & $\begin{pmatrix} < & < \\ \leq & = \end{pmatrix}$ & Y & X \\
\hline
3 & $\begin{pmatrix} = & < \\ \leq & < \end{pmatrix}$ & X & Y \\
\hline
4 & $\begin{pmatrix} \leq & = \\ \leq & \leq \end{pmatrix}$ & X & X \\
\hline
\end{tabular}
\caption{Reading the first and last letters of the word from the matrix}
\label{tabular}
\end{figure}
\end{center}

\begin{cor}\label{cor_read}
	The elements $L(u)$ and $R(u)$ can be read off from $M_u$. The integer $\ell(u)$ can be read off from $M_u$. 
\end{cor}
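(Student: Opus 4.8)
The plan is to invert Proposition~\ref{prop_read}: that proposition records, for each of the four situations, the precise pattern of $q$-degrees of the entries of $M_u$, so conversely the observed pattern should pin down the situation and, with it, the letters $L(u)$ and $R(u)$.

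First I would recover the integer $n_q(u)$ directly from the matrix. By the last sentence of Proposition~\ref{prop_read}, $n_q(u)$ is simply the highest $q$-degree occurring among the four entries $a_{11}, a_{12}, a_{21}, a_{22}$ of $M_u$, so it is manifestly an invariant of $M_u$.

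Next I would read off $L(u)$ and $R(u)$ by comparing each individual degree $d_q(a_{ij})$ with $n_q(u)$. The one genuine subtlety is that the four patterns in Figure~\ref{tabular} are not disjoint when viewed as the sets of entries attaining the maximum: situations $2$, $3$ and $4$ all permit $a_{21}$ to attain $n_q(u)$, and situation $4$ even permits $a_{11}$ and $a_{22}$ to do so. Hence one cannot simply declare that ``the entry of maximal degree determines the situation''. The way around this is to exploit the one entry whose behaviour is unambiguous: by Proposition~\ref{prop_read}, $d_q(a_{12}) = n_q(u)$ holds in situation $4$ and in no other situation. I would therefore argue by a cascade. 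If $d_q(a_{12}) = n_q(u)$, then we are in situation $4$, so $L(u) = R(u) = X$. Otherwise, among the remaining situations $1$, $2$, $3$, the entry $a_{11}$ attains $n_q(u)$ only in situation $3$ and $a_{22}$ only in situation $2$; so if $d_q(a_{11}) = n_q(u)$ we are in situation $3$ (giving $L(u) = X$, $R(u) = Y$), if instead $d_q(a_{22}) = n_q(u)$ we are in situation $2$ (giving $L(u) = Y$, $R(u) = X$), and in the remaining case we are in situation $1$ (giving $L(u) = R(u) = Y$). Each branch is justified by exactly one line of the proposition, and because situation $4$ has been filtered out first, the branches are mutually exclusive; thus $L(u)$ and $R(u)$ are read off unambiguously.

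Finally I would recover $\ell(u)$ by combining the two definitions. Since $n_q(u) = (k-1) + \mathbf{1}_{n_1 \neq 0}$ and $\ell(u) = 2(k-1) + \mathbf{1}_{n_1 \neq 0} + \mathbf{1}_{m_k \neq 0}$, eliminating $k-1$ gives
\begin{equation*}
\ell(u) = 2\,n_q(u) - \mathbf{1}_{n_1 \neq 0} + \mathbf{1}_{m_k \neq 0}.
\end{equation*}
The two indicators are exactly $\mathbf{1}_{L(u) = Y}$ and $\mathbf{1}_{R(u) = X}$, which the previous step has already extracted from $M_u$; together with the value of $n_q(u)$ this expresses $\ell(u)$ as a quantity read off from the matrix. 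The only point requiring care is the bookkeeping in the cascade, namely resolving the overlaps in the degree-patterns in the correct priority order, but once situation $4$ is detected first via $a_{12}$ everything else falls into place, so I do not expect any serious obstacle: the substance of the argument already lies in Proposition~\ref{prop_read}.
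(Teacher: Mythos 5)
Your proof is correct and takes essentially the same route as the paper: both invert Proposition~\ref{prop_read} by reading the pattern of $q$-degrees of the entries, recover the two indicators $\mathbf{1}_{n_1\neq 0}$, $\mathbf{1}_{m_k\neq 0}$, and then deduce $\ell(u)$ from $n_q(u)=(k-1)+\mathbf{1}_{n_1\neq 0}$. The only cosmetic difference is the disambiguation mechanism: the paper tests whether the maximal degree occurs only in the second row (giving $L(u)=Y$) and whether it occurs in the second column (giving $R(u)=X$), while you use a prioritized cascade of single-entry tests (first $a_{12}$, then $a_{11}$, then $a_{22}$) -- the two criteria are equivalent.
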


\begin{proof}
	It follows from the above proposition that $Y=L(u)$ if and only if the maximal power of $q$
	among the coefficients of $M_u$ appears only in the second line of $M_u$. We can thus recover ${\bf{1}}_{n_1 \neq 0}$ from $M_u$. Similarly, we have $X=R(u)$ is and only if the maximal power of $q$ appears in the second column of $M_u$. We can thus also recover ${\bf{1}}_{m_k \neq 0}$.
	
	Since the maximal power in $q$ of a coefficient of $M_u$ is equal to $n_q(u)= (k-1) + {\bf{1}}_{n_1 \neq 0}$, we can thus recover $k$, hence also $\ell(u)$. 
\end{proof}

\begin{proof}[Proof of Proposition~\ref{prop_read}]
The proof will be by induction on $\ell(u)$. If $\ell(u)=1$, then $u=X^k$ for some $1\leq k \leq m-1$ or $u=Y^k$ for some $1\leq k \leq n-1$, hence we either lie in Situation $1$ or in Situation $4$, and the result holds by assumption on the matrices $M_X^k$, $M_Y^k$.

Hence assume that $\ell(u) \geq 1$ and let $\alpha$ be a Garside factor such that, denoting $v= \alpha u$, we have $\ell(v)= \ell(u)+1$. Assume that the result holds by induction on $u$. Let $M_u=\begin{pmatrix} a_{11} & a_{12} \\ a_{21} & a_{22} \end{pmatrix}$. If $\alpha=X^k$ for some $1 \leq k \leq m-1$, then  
$$M_v=M_X^k M_u = t^{nk} \begin{pmatrix} c_1(k) a_{11} + \underbrace{c_2(k)}_{\neq 0} a_{21} & c_1(k) a_{12} + \underbrace{c_2(k)}_{\neq 0} a_{22} \\ c_3(k) a_{11} + c_4(k) a_{21} & c_3(k) a_{12} + c_4(k) a_{22}  \end{pmatrix},$$ where the $c_i(k)$'s are complex numbers.

If $\alpha=Y^k$ for some $1\leq k \leq n-1$, then $$M_v=M_Y^k M_u = t^{km} \begin{pmatrix} \underbrace{P_1(k)}_{\mathrm{deg}\leq 0} a_{11} + \underbrace{P_2(k)}_{\mathrm{deg}\leq 0} a_{21} & \underbrace{P_1(k)}_{\mathrm{deg}\leq 0} a_{12} + \underbrace{P_2(k)}_{\mathrm{deg}\leq 0} a_{22} \\ \underbrace{P_3(k)}_{ \mathrm{deg} = 1} a_{11} + \underbrace{P_4(k)}_{\mathrm{deg}\leq 0} a_{21} & \underbrace{P_3(k)}_{ \mathrm{deg} = 1} a_{12} + \underbrace{P_4(k)}_{\mathrm{deg}\leq 0} a_{22}   \end{pmatrix},$$ where the $P_i(k)$'s are Laurent polynomials in $q$. 

Assume that $u$ lies in Situation 1. This means that the first Garside factor of $u$ is a nontrivial power of $Y$, hence that $\alpha=X^k$ for some $1\leq k \leq m-1$. We have $n_q(\alpha u)= n_q(u)$. By induction, the maximal degree of a coefficient of $M_u$ is $n_q(u)$ and is achieved in $a_{21}$ and only in this coefficient. The above calculation for $M_v=M_X^k M_u=(b_{ij})_{1\leq i,j \leq 2}$ then yields that the maximal degree of a coefficient in $M_v$ is still $n_q(u)$ and achieved in $b_{11}$ and possibly in $b_{21}$ as well, but not in the other coefficients. Since $v$ lands in Situation $3$, this shows the statement in this case. 

Assume that $u$ lies in Situation 2. The first Garside factor of $u$ is as in the previous case, we can thus argue similarly. We have $n_q(\alpha u)= n_q(u)$ and by induction, the maximal degree of a coefficient of $M_u$ is equal to $n_q(u)$ and is achieved in $a_{22}$ and possibly in $a_{21}$, but not in the other coefficients. The above calculation for $M_v=M_X^k M_u=(b_{ij})_{1\leq i,j \leq 2}$ then yields that the maximal degree of a coefficient in $M_v$ is still $n_q(u)$ and achieved in $a_{21}$ and possibly in any other coefficient of $M_v$ as well. Since $v$ lands in Situation $4$, this shows the statement in this case. 

Assume that $u$ lies in Situation 3. This means that the first Garside factor of $u$ is a nontrivial power of $X$, hence that $\alpha=Y^k$ for some $1\leq k \leq n-1$. We have $n_q(\alpha u)= n_q(u)+1$. By induction, the maximal degree of a coefficient of $M_u$ is $n_q(u)$ and is achieved in $a_{11}$ and possibly in $a_{21}$, but not in the other coefficients. The above formula for $M_v=M_Y^k M_u=(b_{ij})_{1\leq i,j \leq 2}$ then yields that the maximal degree of a coefficient in $M_v$ is $n_q(u)+1$ and is achieved in $b_{21}$ and only in $b_{21}$. Since $v$ lands in Situation $1$, this shows the statement in this case.

Finally, assume that $u$ lies in Situation 4. The first Garside factor of $u$ is as in the previous case, we can thus argue similarly. We have $n_q(\alpha u) = n_q(u)+1$.  By induction, the maximal degree of a coefficient of $M_u$ is $n_q(u)$ and is achieved in $a_{12}$ and possibly in any other coefficient. The above formula for $M_v=M_Y^k M_u=(b_{ij})_{1\leq i,j \leq 2}$ then yields that the maximal degree of a coefficient in $M_v$ is $n_q(u)+1$ and is achieved in $b_{22}$ and possibly in $b_{21}$ as well, but not in the other coefficients. Since $v$ lands in Situation $2$, this shows the statement in this case.     
\end{proof}

We now have all the tools at our disposal to show the faithfulness.

\subsection{Proof of Proposition~\ref{fund}}

\begin{prop}
Assume that $M_X, M_Y$ satisfy the assumptions of Proposition~\ref{fund}. Then 
\begin{enumerate}
	\item The representation $X \mapsto M_X$, $Y \mapsto M_Y$ is faithful, 
	\item The specialization of the representation at $t=q$ stays faithful. 
\end{enumerate}
\end{prop}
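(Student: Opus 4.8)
The plan is to follow the Krammer-style philosophy of Subsection~\ref{sub_phil}: first reduce faithfulness of $\rho$ on $G(n,m)$ to injectivity of $\rho$ on the monoid $\mathcal{M}$, and then reconstruct each monoid element from its matrix. For the reduction, given $g\in\ker\rho$ I write $g=uv^{-1}$ with $u,v\in\mathcal{M}$ (possible by Remark~\ref{rmq_gars}); then $\rho(u)=\rho(v)$, and injectivity on $\mathcal{M}$ forces $u=v$, hence $g=1$. The starting observation for the reconstruction is that, writing $C_1$ and $P(1)$ for the coefficient matrices in the $k=1$ cases of conditions~(2) and~(3), one has $M_X=t^nC_1$ and $M_Y=t^mP(1)$, so every generator contributes a fixed power of $t$. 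Thus $\rho(u)=t^{\ell_w(u)}N_u$ with $N_u\in\mathrm{GL}_2(\mathbb{C}[q^{\pm1}])$, and the exponent $\ell_w(u)$ together with the matrix $N_u$ can be read off separately from $\rho(u)$.

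The key enabling lemma I would prove next is that $\rho(\Delta)$ is a \emph{scalar} matrix. Indeed $\rho(\Delta)=M_X^m=t^{nm}C_1^m$ is a constant matrix times $t^{nm}$, and since $\Delta$ is central it commutes with $M_X$ and $M_Y$, so $C_1^m$ commutes with both $C_1$ and $P(1)$. If $C_1$ is already scalar there is nothing to prove; otherwise any common eigenvector of $C_1$ and $P(1)$ is a constant eigenvector of $C_1$, and a degree count using $\deg(P_3(1))=1>0\geq\deg(P_i(1))$ ($i\neq3$) forces it to be proportional to $(0,1)^{\mathsf T}$, contradicting $c_2(1)\neq0$. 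Hence $C_1$ and $P(1)$ act irreducibly, generate the full matrix algebra, and their centralizer is scalar, so $\rho(\Delta)=t^{nm}\lambda I$ for some $\lambda\in\mathbb{C}^{\times}$. Writing any $u\in\mathcal{M}$ as $u=\Delta^p u'$ with $u'\in\mathcal{M}_{\Delta\not\leq}$ then gives $\rho(u)=\lambda^p t^{\ell_w(u)}N_{u'}$. Crucially the central part enters only as a scalar times a monomial, so the pattern of which entries of $\rho(u)$ attain the top degree in $q$ coincides with that of $N_{u'}$, to which Proposition~\ref{prop_read} applies.

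I can then reconstruct $u$ from $\rho(u)$ by peeling one letter at a time. Reading this pattern as in Corollary~\ref{cor_read} returns $L(u')$; if $L(u')=Y$ I replace $\rho(u)$ by $M_Y^{-1}\rho(u)=\rho\big(\Delta^p(Y^{-1}u')\big)$, and symmetrically for $X$, noting that $\Delta$ commutes out and $Y^{-1}u'$ is again $\Delta$-free with strictly smaller $\ell_w$. The procedure terminates exactly when the matrix becomes scalar, which by the four-situation analysis happens if and only if the $\Delta$-free part is trivial; the $t$-degree of the terminal scalar $\lambda^p t^{nmp}$ then returns $p$. As this algorithm is a deterministic function of the input matrix whose output is the word $u=\Delta^p u'$, the equality $\rho(u)=\rho(v)$ yields $u=v$, proving~(1).

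For~(2) the substitution $t=q$ threatens to destroy the bookkeeping, since the separate $t$- and $q$-degrees that tracked $\ell_w(u)$ and $n_q(u')$ now merge; this is the main obstacle. It is resolved precisely because $t$ entered only through the global monomial factor $t^{\ell_w(u)}$ and because $\rho(\Delta)$ is scalar: under $t=q$ one gets $\overline{\rho}(u)=\lambda^p q^{\ell_w(u)}\overline{N_{u'}}$ and $\overline{\rho}(\Delta)=q^{nm}\lambda I$, so the central and weighted-length contributions remain a single global factor $\lambda^p q^{\ell_w(u)}$ multiplying every entry equally. Such a factor shifts all entries' $q$-degrees by the same amount, hence preserves the relative top-degree pattern on which Proposition~\ref{prop_read} and Corollary~\ref{cor_read} rely, and no cancellation between differently $t$-graded terms can occur within an entry. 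The identical peeling reconstruction therefore runs verbatim for $\overline{\rho}$, recovering $u'$ from the pattern and $p$ from the $q$-degree of the terminal scalar $\lambda^p q^{nmp}$; this gives injectivity of $\overline{\rho}$ on $\mathcal{M}$ and, as before, faithfulness of the specialized representation.
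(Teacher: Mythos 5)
Your proof is correct, but it takes a genuinely different route from the paper's. The paper's proof is a short contradiction argument: it reduces to injectivity on $\mathcal{M}$, takes $u,u'$ with $M_u=M_{u'}$, cancels their common left-divisors (using the existence of left gcds in the Garside monoid), disposes of the case where $\Delta$ divides one of them by a determinant computation ($\det$ of the specialized matrix is $q^{\ell_w(u)}z$ with $z\in\mathbb{C}^*$, so equal matrices force equal, hence zero, weighted lengths), and then invokes Corollary~\ref{cor_read} to conclude $L(u)=L(u')$, contradicting coprimality of $u$ and $u'$. You instead prove that $\rho(\Delta)$ is scalar and run the full peeling reconstruction, recovering $u'$ letter by letter and $p$ from the terminal scalar; this is essentially the paper's \emph{later} subsection on recovering the Garside normal form, repurposed as the faithfulness proof. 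Two differences are worth noting. First, your proof of the scalar lemma (no common eigenvector, by the degree count on $P_3(1)$ versus $c_2(1)\neq 0$, hence absolute irreducibility and a scalar centralizer) is cleaner than the paper's, which establishes the same fact by writing out the commutation relations entrywise and extracting degree constraints. Second, your argument uses strictly less Garside-theoretic input: no left gcds and no determinant trick, only the elementary facts that elements of $\mathcal{M}$ admit some expression $\Delta^p u'$ with $u'$ $\Delta$-free and that $G(n,m)$ is generated by fractions (Remark~\ref{rmq_gars}); this is more in the spirit of the paper's own claim that the approach \emph{reproves} the Garside properties rather than assuming them, and it yields the normal-form reconstruction as a byproduct. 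Both proofs rest on the same engine, Proposition~\ref{prop_read} and Corollary~\ref{cor_read}, and both handle the specialization $t=q$ by the same observation that it multiplies all entries by a common power of $q$, leaving the top-degree pattern intact. The only point to be careful about in your write-up, and which you did handle, is that the peeling terminates exactly when the matrix becomes scalar: this needs the four-situation analysis to guarantee that a nontrivial $\Delta$-free element never has a scalar matrix, since in each situation some off-diagonal entry is nonzero or the diagonal entries have distinct $q$-degrees.
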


\begin{proof}
It suffices to show the second statement to get the first one. Since $\mathcal{M}$ is a Garside monoid, it suffices to show that the restriction of the representation to $\mathcal{M}$ is faithful (see Subsection~\ref{sub_phil}). Let $u,u'\in \mathcal{M}$ and assume that $M_u=M_{u'}$. We want to show that $u=u'$. We can assume that $u$ and $u'$ have no common non-trivial left-divisor. If $\Delta$ left-divides either $u$ or $u'$, say $u$, then $u'=1$, otherwise $u$ and $u'$ would have a common left-divisor. Now the condition $M_X^m = M_Y^n$ guarantees that the determinant of $t^{-m} M_Y$ is a complex number. It follows that in the specialized representation, the determinant of the matrix $M_u$ of any element $u$ of $\mathcal{M}$ is of the form $q^{\ell_w(u)} z$ where $z\in \mathbb{C}^*$ and $\ell_w(u)$ is the weighted length of $u$. If $\Delta$ left-divides $u$, we have $\ell_w(u) > 0$, hence we cannot have $\det(M_u)= 1$, which contradicts $M_u=M_{u'}$.  

We can thus assume that both $u, u'$ lie in $\mathcal{M}_{\Delta\not\leq}$. By a determinant argument again, we can assume that $u, u'\neq 1$. By Corollary~\ref{cor_read}, we can read the $L(u), L(u')$ from $M_u, M_{u'}$ (note that the specialization at $t=q$ just multiplies every coefficient by the same power of $q$, hence the content of the table in Figure~\ref{tabular} summarizing Proposition~\ref{prop_read} stays valid), hence $L(u)=L(u')$, thus $u$ and $u'$ have a common non-trivial left-divisor, a contradiction. This concludes the proof. 
\end{proof}

\subsection{Recovering the Garside normal form from the representation}

Assume again that $M_X$ and $M_Y$ satisfy the assumptions of Proposition~\ref{fund}. The aim of this subsection is to show that we can recover the Garside normal form of any element of $\mathcal{M}$ from the matrix $M_u$. 

Consider an element $u\in \mathcal{M}$. We can write it in the form $u' \Delta^k$\footnote{In fact, it can be written \textit{uniquely} in this form. Nevertheless, uniqueness is not needed, and the content of this subsection reproves it (provided that we can indeed build a representation satisfying the assumptions of Proposition~\ref{fund}): we see that we can recover the form $u' \Delta^k$ from $M_u$, which shows uniqueness. It also shows that $\mathcal{M}(n,m)$ embeds into $G(n,m)$.}, with $u'\in \mathcal{M}_{\Delta\not\leq}\cup\{1\}$. In our representation we then have $$M_u=M_{u'} M_\Delta^k.$$
Now since $\Delta=X^m=Y^n$ is central in $G(n,m)$, the matrix $M_\Delta=t^{nm}\begin{pmatrix}a & b \\ c & d \end{pmatrix}$ where $a,b,c,d\in\mathbb{C}$ has to commute with both $M(X)$ and $M(Y)$. Writing $P_i=P_i(1)$ and $c_i=c_i(1)$ for simplicity, this implies that \begin{align*}\begin{pmatrix} a c_1 + b c_3 & a c_2 + b c_4 \\ c c_1 + d c_3 & c c_2 + d c_4 \end{pmatrix} &=\begin{pmatrix}  a c_1 + c c_2 & b c_1 + d c_2 \\ a c_3 + c c_4 & b c_3 + d c_4  \end{pmatrix},\\ \begin{pmatrix} a P_1 + b P_3 & a P_2 + b P_4 \\ c P_1 + d P_3 & c P_2 + d P_4 \end{pmatrix} &=\begin{pmatrix}  a P_1 + c P_2 & b P_1 + d P_2 \\ a P_3 + c P_4 & b P_3 + d P_4  \end{pmatrix},\end{align*}
 yielding $$\begin{pmatrix} b c_3 & a c_2 + b c_4 \\ c c_1 + d c_3 & c c_2 \end{pmatrix} =\begin{pmatrix}  c c_2 & b c_1 + d c_2 \\ a c_3 + c c_4 & b c_3  \end{pmatrix}$$ and $$\begin{pmatrix}  b P_3 & a P_2 + b P_4 \\ c P_1 + d P_3 & c P_2 \end{pmatrix} =\begin{pmatrix}  c P_2 & b P_1 + d P_2 \\ a P_3 + c P_4 & b P_3 \end{pmatrix}.$$
Since $\deg(P_3)=1$ while $\deg(P_2) \leq 0$, this last equality of matrices forces to have $b=0$, yielding $c P_2=0$ and $a=d$. The first equality becomes $$\begin{pmatrix} 0 & a c_2 + b c_4 \\ c c_1 + a c_3 & c c_2 \end{pmatrix} =\begin{pmatrix}  c c_2 & a c_2 \\ a c_3 + c c_4 & 0  \end{pmatrix},$$ yielding $c=0$ since $c_2 \neq 0$. The matrix $M_\Delta$ is thus the scalar matrix $t^{nm}\begin{pmatrix} a & 0 \\ 0 & a \end{pmatrix}$.

It follows that the matrix $M_u$ has the same degrees of coefficients (in $q$) as $M_{u'}$. One can thus with Proposition~\ref{prop_read} recover $L(u')$ from $M_u$, and inductively recover the Garside normal form of $u'$ (recall that the Garside length is also recovered): we consider the matrix $M_{L(u')}^{-1} M_{u}$, and iterate the procedure until reaching a scalar matrix. This matrix is then $M_\Delta^k$, and one recovers the exponent by looking at the power of $t$ (which has to be $t^{knm}$). This allows one to recover the (right) Garside normal form $x_1 x_2 \cdots x_\ell \Delta^k$ of $u$. The $x_i$'s are just the maximal powers of $X$ or $Y$ that are obtained during the procedure. 

Note that the algorithm still works in the representation specizliaed at $q=t$: in this case the matrix $M_u$ does not have the same degrees of coefficients in $q$ as $M_{u'}$ if $k>0$, but all coefficients are just mutiplied by the same power of $q$ (which is $q^{knm}$), hence one can still apply Proposition~\ref{prop_read} until reaching a scalar matrix.

\section{Construction of representations satisfying Proposition~\ref{fund}}\label{const_rep}

The aim of this section is to construct representations satisfying the assumptions of Proposition~\ref{fund}. In subsection~\ref{sub_odd_dih} we show that the Burau representation of odd dihedral Artin groups can be treated in this way. In subsection~\ref{sub_faith_gen} we construct a simple faithful representation for all groups of the form $G(n,m)$. In subsection~\ref{sub_burau_torus} we construct a faithful representation generalizing the Burau representation of odd dihedral Artin groups to a large family of groups of the form $G(n,m)$, which includes some complex braid groups. In subsection~\ref{sub_unitarizable} we show that the representation constructed in the previous subsection is unitarizable. 

\subsection{Odd dihedral Burau representation}\label{sub_odd_dih}

Let $m=2$ and $n=2\ell+1$ be odd. Then $G(n,m)$ is isomorphic to the Artin group of dihedral type $I_2(n)$; its classical Artin presentation is Presentation~\ref{pres_3} in this case, which we shall write $$\langle \sigma_1, \sigma_2 \ \vert \ \underbrace{\sigma_1 \sigma_2 \cdots}_{n~\text{factors}} = \underbrace{\sigma_1 \sigma_2 \cdots}_{n~\text{factors}} \rangle.$$  The generator $Y$ corresponds to $\sigma_1 \sigma_2$ while $X$ corresponds to $\underbrace{\sigma_1 \sigma_2 \cdots}_{n~\text{factors}}$. In the classical (reduced) Burau representation $\rho_1 : G(n,m) \longrightarrow \mathrm{GL}_2(\mathbb{R}[q^{\pm 1}])$, one has $$\rho_1(Y)= q^2 \begin{pmatrix} -1 & -q^{-1} \\ c q & c-1\end{pmatrix}, \ \rho_1(X) = q^n \begin{pmatrix} 0 & - \frac{\zeta_n^\ell - \zeta_n^{-\ell}}{\zeta_n - \zeta_n^{-1}} \\ c \frac{\zeta_n^{\ell+1} - \zeta_n^{-\ell-1}}{\zeta_n - \zeta_n^{-1}} & 0  \end{pmatrix},$$ where $c^2 = 4 \cos(\pi / n)$ (see for instance~\cite[(3.2), (3.3.1) and and Proposition 4.1]{LX}). To apply Lemma~\ref{fund} with the matrices $M_X:=\rho_1(X)$ and $M_Y:=\rho_1(Y)$, one needs to calculate the powers of $M_Y$, which is done in~\cite[Lemma 3.3]{LX}: for $k \geq 0$, setting $[\zeta_n]_k:=\frac{\zeta_n^k - \zeta_n^{-k}}{\zeta_n - \zeta_n^{-1}}$ we have $$M_Y^k = q^{2k} \begin{pmatrix} -[\zeta_n]_k - [\zeta_n]_{k-1} & -q^{-1} [\zeta_n]_k \\ cq [\zeta_n]_k & [\zeta_n]_k + [\zeta_n]_{k+1} \end{pmatrix}.$$
Since $m=2$, the only power of $M_X$ to consider is $M_X$ itself, while for $M_Y$ it is $M_Y^k$ for all $1\leq k \leq n-1$. Since $n$ is odd, we have $c\neq 0$, $\zeta_n^\ell \neq \zeta_n^{-\ell}$ and $[\zeta_n]_k \neq 0$ for all $1\leq k \leq n-1$, hence the conditions of Lemma~\ref{fund} are fulfilled. This yields a new proof that the Burau representation of odd dihedral Artin groups is faithful. This will be reobtained in Subsection~\ref{sub_burau_torus} below (see Example~\ref{ex_burau_part}) by viewing the odd dihedral Burau representation as a particular case of a representation built for a larger family of groups inside the family of groups of the form $G(n,m)$.   

\subsection{A faithful representation for arbitrary groups of the form $X^m=Y^n$}\label{sub_faith_gen}

Let $n,m\geq 2$ be two integers. We shall construct a faithful representation $\rho_2 : G(n,m) \longrightarrow \mathrm{GL}_2(\mathbb{C}[t^{\pm 1}, q^{\pm 1}])$. Note that if $\ell \geq 2$ is an integer, for $\zeta_\ell= e^{2i\pi/\ell}$, for all $k\geq 0$ we have $$\begin{pmatrix} \zeta_\ell & 1 \\ 0 & \zeta_{\ell}^{-1} \end{pmatrix}^k =\begin{pmatrix} \zeta_\ell^k & [\zeta_\ell]_k \\ 0 & \zeta_\ell^{-k} \end{pmatrix}, \  \begin{pmatrix} \zeta_\ell & 0 \\ 1 & \zeta_{\ell}^{-1} \end{pmatrix}^k =\begin{pmatrix} \zeta_\ell^k & 0 \\ [\zeta_\ell]_k & \zeta_\ell^{-k} \end{pmatrix}.$$ 
Let $\rho_2(X) := t^n \begin{pmatrix}  \zeta_{2m} & 1 \\ 0 & \zeta_{2m}^{-1}  \end{pmatrix}$. For all $k \geq 0$ we thus have $$ \rho_2(X)^k = t^{nk} \begin{pmatrix}\zeta_{2m}^k & [\zeta_{2m}]_k \\ 0 & \zeta_{2m}^{-k} \end{pmatrix}.$$ Let $\rho_2(Y):=t^m \begin{pmatrix} \zeta_{2n} & 0 \\ q & \zeta_{2n}^{-1} \end{pmatrix}$. Noting that $\rho_2(Y) = P (\rho_2(Y))_{q=1} P^{-1}$, where $P=\begin{pmatrix} 1 & 0 \\ 0 & q \end{pmatrix}$, we get that for all $k\geq 0$, $$\rho_2(Y)^k = t^{mk} \begin{pmatrix} \zeta_{2n}^k & 0 \\ q[\zeta_{2n}]_k & \zeta_{2n}^{-k} \end{pmatrix}.$$
Setting $M_X=\rho_2(X)$ and $M_Y=\rho_2(Y)$ we then have \begin{itemize}
	\item $M_Y^n = - t^{nm} I_2=M_X^m,$
	\item For all $1 \leq k \leq m-1$ (resp. for all $1\leq k \leq n-1$), the matrix $M_X^k$ (resp. $M_Y^k$) satisfies assumption $2$ (resp. $3$) of Proposition~\ref{fund} (observe that $[\zeta_{2\ell}]_k=0$ if and only if $\ell$ divides $k$). 
\end{itemize}
By Proposition~\ref{fund}, we thus get that $\rho_2$ is faithful. 
\begin{rmq}[Specializing $t$ or $q$ to $1$]\label{rmq_spec}
Specializing $t$ to $1$ yields a representation of $G(n,m)$ which is not faithful since $(\rho_2(Y))_{t=1}^n = - I_2 = (\rho_2(X))_{t=1}^m$, hence $(\rho_2(Y))_{t=1}$ and $(\rho_2(X))_{t=1}$ have finite order while  $X$ and $Y$ have infinite order inside $G(n,m)$. The question of whether the above representation with $q$ specialized at $1$ is faithful or not seems interesting; if $n=m=3$, then the representation is not faithful as setting $A:=(\rho_2(X))_{q=1}$ and $B:=(\rho_2(Y))_{q=1}$, one checks that $ABAB=BABA$, while $XYXY\neq YXYX$ inside $G(3,3)$ (to see this one can use the fact that $\langle X, Y \ \vert \ X^3 = Y^3 \rangle$ is a Garside presentation of $G(3,3)$, hence that the monoid with the same presentation embeds into $G(3,3)$; both elements $XYXY$ and $YXYX$ lie in this submonoid, and differ inside it). For $n=2$ and $m=3$, we do not know whether it is faithful or not. It seems reasonable to expect the representation not to be faithful if $n$ and $m$ are not coprime, but for coprime $n$ and $m$ we do not have a clear idea of what to expect, leading to the following question.   
\end{rmq}

\begin{question}
Assume that $n$ and $m$ are coprime. Does the representation $X \mapsto (\rho_2(X))_{q=1}$, $Y \mapsto (\rho_2(Y))_{q=1}$ stay faithful?
\end{question}

\subsection{A Burau representation for some torus knot groups}\label{sub_burau_torus}

In this section we shall assume that $n, m\geq 2$ are coprime and that $m$ is not divisible by $3$.  

Let $a, b\in\mathbb{Z}$ be such that $an-bm=1$. Consider the matrices 
$$M_X= (-1)^b i^{n+1} t^n \begin{pmatrix} U' & V \\ V & -U \end{pmatrix}, \ M_Y= (-1)^a i^m t^m \begin{pmatrix} A_1 [\boldsymbol{\mu}^b]_1 + \mu_1 & q^{-1} V [\boldsymbol{\lambda}]_a  [\boldsymbol{\mu}^b]_1  \\ - \frac{A_1 A_2}{ q^{-1} V [\boldsymbol{\lambda}]_a} [\boldsymbol{\mu}^b]_1 & \mu_1 - A_2 [\boldsymbol{\mu}^b]_1 \end{pmatrix},$$ where 
\begin{itemize}
	\item $U= \zeta_{4m}^{3m-2} + \zeta_{4m}^{3m+2} = \zeta_{4m}^{3m-2} - (\zeta_{4m}^{3m-2})^{-1}$,
\item $U' = U + \lambda_1 + \lambda_2$, where $\lambda_1= \zeta_{4m}^{3m-6}$ and $\lambda_2=  \zeta_{4m}^{3m+6}$. Note that $\lambda_2 = - \lambda_1^{-1}$. 
\item $V= -1 - \zeta_{m} - \zeta_{m}^{-1}$ if $m > 2$, and $-1$ otherwise. Since $m\neq 3$ we have $V \neq 0$. Note that $U' U + V^2 = 1$. 
	\item $\mu_1= \zeta_n$ and $\mu_2=\zeta_n^{-1}$.
\item For $k\in\mathbb{Z}$, $[\boldsymbol{\lambda}]_k=\frac{\lambda_2^k - \lambda_1^k}{\lambda_2 - \lambda_1}$, and $[\boldsymbol{\mu}^b]_k= \frac{\mu_2^k - \mu_1^k}{\mu_2^b - \mu_1^b}$. Note that $[\boldsymbol{\lambda}]_0=0$, $[\boldsymbol{\lambda}]_1=1=[\boldsymbol{\lambda}]_{-1}$, $[\boldsymbol{\lambda}]_k=0$ if and only if $m$ divides $3k$, if and only if $m$ divides $k$ since we assumed that $3$ does not divide $m$. In particular $[\boldsymbol{\lambda}]_a\neq 0$ since $(a,m)=1$. Similarly we have $[\boldsymbol{\mu}^b]_k=0$ if and only if $n$ divides $2k$.
\item $A_i= q^{-1} U' [\boldsymbol{\lambda}]_a + q^{-1} [\boldsymbol{\lambda}]_{a-1} - \mu_i^b$.  
\end{itemize}
\begin{prop}\label{prop_form_powers}
	Let $k\in \mathbb{Z}$. We have \begin{align*} 
		M_X^k = (-1)^{bk} i^{nk+k} t^{nk}\begin{pmatrix} U' [\boldsymbol{\lambda}]_k + [\boldsymbol{\lambda}]_{k-1} &  V [\boldsymbol{\lambda}]_k \\ V [\boldsymbol{\lambda}]_k & -U' [\boldsymbol{\lambda}]_k + [\boldsymbol{\lambda}]_{k+1}\end{pmatrix},\\ 
		M_Y^k= (-1)^{ak} i^{mk} t^{mk} \begin{pmatrix} A_1 [\boldsymbol{\mu}^b]_{k} + \mu_1^{k} & q^{-1} V [\boldsymbol{\lambda}]_a  [\boldsymbol{\mu}^b]_{k}  \\ - \frac{A_1 A_2}{ q^{-1} V [\boldsymbol{\lambda}]_a} [\boldsymbol{\mu}^b]_{k} & \mu_1^{k} - A_2 [\boldsymbol{\mu}^b]_{k} \end{pmatrix}.
	\end{align*}
\end{prop}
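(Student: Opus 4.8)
The plan is to prove the two identities separately, and in each case to factor out the scalar prefactor and reduce to computing the $k$-th power of a single $2\times 2$ matrix. Writing $M_X=(-1)^b i^{n+1}t^n\,N$ and $M_Y=(-1)^a i^m t^m\,P$ with
\[
N=\begin{pmatrix} U' & V \\ V & -U\end{pmatrix},\qquad
P=\begin{pmatrix} A_1 [\boldsymbol{\mu}^b]_1 + \mu_1 & q^{-1} V [\boldsymbol{\lambda}]_a  [\boldsymbol{\mu}^b]_1  \\ - \frac{A_1 A_2}{ q^{-1} V [\boldsymbol{\lambda}]_a} [\boldsymbol{\mu}^b]_1 & \mu_1 - A_2 [\boldsymbol{\mu}^b]_1 \end{pmatrix},
\]
one has $M_X^k=(-1)^{bk}i^{nk+k}t^{nk}N^k$ (using $(n+1)k=nk+k$) and $M_Y^k=(-1)^{ak}i^{mk}t^{mk}P^k$, so it suffices to identify $N^k$ and $P^k$. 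In both cases the strategy is identical: show the matrix satisfies a quadratic Cayley--Hamilton relation whose roots are exactly the $\lambda_i$, resp.\ the $\mu_i$, note that the sequences $([\boldsymbol{\lambda}]_k)_{k\in\BZ}$ and $([\boldsymbol{\mu}^b]_k)_{k\in\BZ}$ obey the matching recurrences, and conclude by induction on $k$ (run in both directions to cover all $k\in\BZ$).

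For the $M_X$ identity I would first record the scalar relations built into the definitions: $U'U+V^2=1$ and $\lambda_2=-\lambda_1^{-1}$, i.e.\ $\lambda_1\lambda_2=-1$. These give $\det N=-(U'U+V^2)=-1$ and $\operatorname{tr}N=U'-U=\lambda_1+\lambda_2$, so the characteristic polynomial of $N$ is $x^2-(\lambda_1+\lambda_2)x-1=(x-\lambda_1)(x-\lambda_2)$. The standard $2\times 2$ power formula then reads $N^k=[\boldsymbol{\lambda}]_k\,N+[\boldsymbol{\lambda}]_{k-1}\,I$, using $-\lambda_1\lambda_2=1$; one anchors the induction with $[\boldsymbol{\lambda}]_{-1}=1,[\boldsymbol{\lambda}]_0=0,[\boldsymbol{\lambda}]_1=1$ and checks the step $N\cdot([\boldsymbol{\lambda}]_kN+[\boldsymbol{\lambda}]_{k-1}I)=[\boldsymbol{\lambda}]_{k+1}N+[\boldsymbol{\lambda}]_kI$ by substituting $N^2=(\lambda_1+\lambda_2)N+I$ and invoking $[\boldsymbol{\lambda}]_{k+1}=(\lambda_1+\lambda_2)[\boldsymbol{\lambda}]_k+[\boldsymbol{\lambda}]_{k-1}$. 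Reading off the entries of $[\boldsymbol{\lambda}]_kN+[\boldsymbol{\lambda}]_{k-1}I$ gives the claimed matrix once one rewrites the lower-right entry $-U[\boldsymbol{\lambda}]_k+[\boldsymbol{\lambda}]_{k-1}$ as $-U'[\boldsymbol{\lambda}]_k+[\boldsymbol{\lambda}]_{k+1}$, which is the identity $(U'-U)[\boldsymbol{\lambda}]_k=(\lambda_1+\lambda_2)[\boldsymbol{\lambda}]_k=[\boldsymbol{\lambda}]_{k+1}-[\boldsymbol{\lambda}]_{k-1}$.

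For the $M_Y$ identity the key structural observation is that $P$ is a scalar plus a rank-one piece. Setting $\gamma=q^{-1}V[\boldsymbol{\lambda}]_a$, $\delta=[\boldsymbol{\mu}^b]_1$ and $B=\begin{pmatrix}A_1 & \gamma\\ -A_1A_2/\gamma & -A_2\end{pmatrix}$, one has $P=\mu_1 I+\delta B$. A direct computation gives $\det B=-A_1A_2+A_1A_2=0$ and $\operatorname{tr}B=A_1-A_2=\mu_2^b-\mu_1^b=:\tau$, the last equality because the $q$-dependent terms $q^{-1}U'[\boldsymbol{\lambda}]_a+q^{-1}[\boldsymbol{\lambda}]_{a-1}$ in $A_1,A_2$ are identical and cancel; hence $B^2=\tau B$ by Cayley--Hamilton. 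Together with $\delta\tau=[\boldsymbol{\mu}^b]_1(\mu_2^b-\mu_1^b)=\mu_2-\mu_1$, I would prove $P^k=\mu_1^k I+[\boldsymbol{\mu}^b]_k B$ by induction: the step $P\cdot(\mu_1^k I+[\boldsymbol{\mu}^b]_kB)$ expands, using $B^2=\tau B$ and $\delta\tau=\mu_2-\mu_1$, to $\mu_1^{k+1}I+(\mu_2[\boldsymbol{\mu}^b]_k+\delta\mu_1^k)B$, and $\mu_2[\boldsymbol{\mu}^b]_k+\delta\mu_1^k=[\boldsymbol{\mu}^b]_{k+1}$ follows at once from the definition of $[\boldsymbol{\mu}^b]$. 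Since $\mu_1^k I+[\boldsymbol{\mu}^b]_k B$ is visibly the claimed matrix, this finishes the computation.

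The main obstacle is not conceptual but bookkeeping: one must verify the scalar identities $U'U+V^2=1$, $\lambda_1\lambda_2=-1$, $A_1-A_2=\mu_2^b-\mu_1^b$ and $\delta\tau=\mu_2-\mu_1$ from the explicit definitions, and then carry the prefactors $((-1)^bi^{n+1}t^n)^k$ and $((-1)^ai^mt^m)^k$ correctly. The vanishing $\det B=0$ is the crux: it collapses $P$ to a scalar plus a rank-one $B$ with $B^2=\tau B$ and makes the power formula so clean, and it relies precisely on the off-diagonal entries of $P$ having product $-A_1A_2$. Both inductions extend to all $k\in\BZ$ since the defining recurrences and anchoring values of $[\boldsymbol{\lambda}]_k$ and $[\boldsymbol{\mu}^b]_k$ hold for negative indices as well; alternatively one checks the case $k=-1$ against $N^{-1}$ and $P^{-1}$ directly. (When $n=2$ the eigenvalues $\mu_1,\mu_2$ coincide and $[\boldsymbol{\mu}^b]_k$ must be read as the appropriate limit, but this is the odd dihedral situation already treated in Subsection~\ref{sub_odd_dih}.)
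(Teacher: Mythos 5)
Your proof is correct and is essentially the paper's approach: the paper disposes of this proposition with the single line ``straightforward computation by induction on $k$'', and your argument is exactly that induction, organized cleanly via the Cayley--Hamilton relation $N^2=(\lambda_1+\lambda_2)N+I$ and the scalar-plus-rank-one decomposition $P=\mu_1 I+\delta B$ with $B^2=\tau B$. All the scalar identities you invoke ($U'U+V^2=1$, $\lambda_1\lambda_2=-1$, $A_1-A_2=\mu_2^b-\mu_1^b$, $\delta\tau=\mu_2-\mu_1$, and the rewriting of the $(2,2)$-entry via $(U'-U)[\boldsymbol{\lambda}]_k=[\boldsymbol{\lambda}]_{k+1}-[\boldsymbol{\lambda}]_{k-1}$) check out, and your treatment of negative $k$ and of the degenerate case $n=2$ (where the paper's own definition of $[\boldsymbol{\mu}^b]_k$ already degenerates) is adequate.
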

\begin{proof}
	Straightforward computation by induction on $k$.  
\end{proof}

\begin{prop}\label{prop_burau_torus}
	The above matrices define a representation $\rho_3$ of the torus knot group $G(n,m)=\langle X, Y \ \vert \ X^m = Y^n \rangle$. 
\end{prop}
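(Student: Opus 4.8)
The plan is to verify that the defining relation $X^m = Y^n$ is preserved, i.e.\ that $M_X^m = M_Y^n$, since a representation of $G(n,m)=\langle X, Y \mid X^m = Y^n\rangle$ is exactly an assignment $X \mapsto M_X$, $Y \mapsto M_Y$ satisfying this single matrix equation. The key simplification is that Proposition~\ref{prop_form_powers} already provides closed formulas for arbitrary powers $M_X^k$ and $M_Y^k$; thus I only need to evaluate those formulas at the specific exponents $k=m$ and $k=n$ respectively, and check that the two resulting matrices agree. So the proof reduces to two independent computations followed by a comparison.

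First I would compute $M_X^m$. Setting $k=m$ in the formula for $M_X^k$, the scalar prefactor becomes $(-1)^{bm} i^{nm+m} t^{nm}$, and the matrix entries involve $[\boldsymbol{\lambda}]_m$, $[\boldsymbol{\lambda}]_{m-1}$, and $[\boldsymbol{\lambda}]_{m+1}$. The crucial input is that $[\boldsymbol{\lambda}]_k = 0$ exactly when $m \mid k$ (using that $3 \nmid m$), so $[\boldsymbol{\lambda}]_m = 0$; this kills the off-diagonal entries and leaves $M_X^m = (-1)^{bm} i^{nm+m} t^{nm}\,\mathrm{diag}([\boldsymbol{\lambda}]_{m-1}, [\boldsymbol{\lambda}]_{m+1})$. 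I would then evaluate $[\boldsymbol{\lambda}]_{m-1}$ and $[\boldsymbol{\lambda}]_{m+1}$ explicitly from the definition $[\boldsymbol{\lambda}]_k = (\lambda_2^k - \lambda_1^k)/(\lambda_2 - \lambda_1)$ with $\lambda_1 = \zeta_{4m}^{3m-6}$, $\lambda_2 = \zeta_{4m}^{3m+6} = -\lambda_1^{-1}$; the relation $\lambda_1\lambda_2 = -1$ together with $\lambda_2^m - \lambda_1^m = (\lambda_2-\lambda_1)[\boldsymbol{\lambda}]_m \cdot(\cdots)$ should collapse this to an explicit scalar matrix, very plausibly a scalar multiple of $I_2$ after using $\lambda_1\lambda_2=-1$ to show $[\boldsymbol{\lambda}]_{m-1}=[\boldsymbol{\lambda}]_{m+1}$.

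Next I would compute $M_Y^n$ by setting $k=n$ in the corresponding formula. Here the relevant vanishing fact is $[\boldsymbol{\mu}^b]_k = 0$ iff $n \mid 2k$; since $\gcd(n,m)=1$ and $an-bm=1$ force $\gcd(n,2)$-type considerations, one gets $[\boldsymbol{\mu}^b]_n = 0$ (as $n \mid 2n$). This again annihilates the off-diagonal and the $[\boldsymbol{\mu}^b]_n$-terms on the diagonal, leaving $M_Y^n = (-1)^{an} i^{mn} t^{mn}\,\mathrm{diag}(\mu_1^n, \mu_1^n) = (-1)^{an} i^{mn} t^{mn}\,I_2$, using $\mu_1 = \zeta_n$ so $\mu_1^n = 1$. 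Finally I would match the two scalar matrices: both carry $t^{nm}$, and I must check the constant scalars agree, i.e.\ that $(-1)^{bm} i^{nm+m}[\boldsymbol{\lambda}]_{m\pm 1} = (-1)^{an} i^{mn}$ after the evaluations, which amounts to a bookkeeping check on powers of $i$ and signs using $an - bm = 1$.

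The main obstacle I expect is the explicit evaluation of $M_X^m$: unlike the $Y$-side, where $\mu_1^n = 1$ trivially, here one must pin down $[\boldsymbol{\lambda}]_{m-1}$ and $[\boldsymbol{\lambda}]_{m+1}$ as \emph{equal} scalars and compute their common value, which requires carefully manipulating the roots of unity $\zeta_{4m}$ and the identity $\lambda_2 = -\lambda_1^{-1}$; verifying that $M_X^m$ is genuinely scalar (not merely diagonal) is the key point making the relation $M_X^m = M_Y^n$ hold. Reconciling the prefactors $(-1)^{bm}i^{nm+m}$ against $(-1)^{an}i^{mn}$ via $an-bm=1$ is the other delicate bookkeeping step, but once the powers are consolidated this should be a direct, if fiddly, verification rather than a conceptual difficulty.
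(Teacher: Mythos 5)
Your proposal is correct and takes essentially the same route as the paper: the paper's proof is exactly the computation you outline, evaluating Proposition~\ref{prop_form_powers} at $k=m$ and $k=n$, using $[\boldsymbol{\lambda}]_m=0$, $[\boldsymbol{\mu}^b]_n=0$, $\mu_1^n=1$, and matching the scalar prefactors via $an-bm=1$. The step you flag as the main obstacle does collapse as you predict: since $\lambda_1\lambda_2=-1$ one gets $\lambda_1^m=\lambda_2^m=-\zeta_{4m}^{3m^2}=-(-i)^m$, hence $[\boldsymbol{\lambda}]_{m-1}=[\boldsymbol{\lambda}]_{m+1}=(-1)^{m+1}i^m$, so $M_X^m$ is the scalar matrix $(-1)^{an}i^{nm}t^{nm}I_2=M_Y^n$.
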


\begin{proof}
	We have $[\boldsymbol\lambda]_m=0$, $[\boldsymbol\lambda]_{m-1}=(-1)^{m+1} i^m = [\boldsymbol\lambda]_{m+1}$ and $[\boldsymbol\mu^b]_n=0$, thus using Proposition~\ref{prop_form_powers} we get 
	\begin{align*} M_X^m &= (-1)^{bm} i^{nm+m} t^{nm} \begin{pmatrix} (-1)^{m+1} i^m & 0 \\ 0 & (-1)^{m+1} i^m \end{pmatrix} = (-1)^{bm+1} i^{nm+m} (-i)^m t^{nm} I_2 \\ &= (-1)^{an} i^{nm} t^{nm} I_2 = M_Y^n.
	\end{align*}  
\end{proof}

We now apply Proposition~\ref{fund} to the above-constructed representation, under the further assumption that $n$ is odd: 

\begin{prop}
Let $n$, $m\geq 2$ be coprime and assume that $n$ is odd and that $m$ is not divisible by $3$. The above-constructed representation of $G(n,m)$ is faithful. 
\end{prop}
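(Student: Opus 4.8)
The plan is to verify that the matrices $M_X$ and $M_Y$ constructed in this subsection satisfy the three hypotheses of Proposition~\ref{fund}, and then to invoke that proposition directly. Condition (1), namely $M_X^m = M_Y^n$, has already been established in Proposition~\ref{prop_burau_torus}, so that step is free. It therefore remains to check conditions (2) and (3), and the explicit formulas for the powers $M_X^k$ and $M_Y^k$ given in Proposition~\ref{prop_form_powers} reduce both of these to elementary checks about the vanishing and degrees of certain Laurent polynomials in $q$.

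For condition (2), I would take $1 \leq k \leq m-1$ and inspect the formula $$M_X^k = (-1)^{bk} i^{nk+k} t^{nk}\begin{pmatrix} U'[\boldsymbol{\lambda}]_k + [\boldsymbol{\lambda}]_{k-1} & V[\boldsymbol{\lambda}]_k \\ V[\boldsymbol{\lambda}]_k & -U'[\boldsymbol{\lambda}]_k + [\boldsymbol{\lambda}]_{k+1}\end{pmatrix}.$$ All entries are constants (independent of $q$), so after factoring out the scalar $t^{nk}$ we land in the required shape with $c_i(k) \in \mathbb{C}$. The only substantive point is that the off-diagonal coefficient $c_2(k) = V[\boldsymbol{\lambda}]_k$ be nonzero: we already know $V \neq 0$ since $m \neq 3$, and $[\boldsymbol{\lambda}]_k \neq 0$ because $[\boldsymbol{\lambda}]_k = 0$ forces $m \mid k$, which is impossible in the range $1 \leq k \leq m-1$. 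This settles condition (2).

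For condition (3), I would take $1 \leq k \leq n-1$ and read off from the second formula of Proposition~\ref{prop_form_powers} that, after factoring $t^{mk}$, the entries are the Laurent polynomials $P_1(k) = A_1[\boldsymbol{\mu}^b]_k + \mu_1^k$, $P_2(k) = q^{-1}V[\boldsymbol{\lambda}]_a [\boldsymbol{\mu}^b]_k$, $P_3(k) = -\tfrac{A_1 A_2}{q^{-1} V[\boldsymbol{\lambda}]_a}[\boldsymbol{\mu}^b]_k$, and $P_4(k) = \mu_1^k - A_2[\boldsymbol{\mu}^b]_k$. The key observation is that $q$ enters only through the quantities $A_i = q^{-1}U'[\boldsymbol{\lambda}]_a + q^{-1}[\boldsymbol{\lambda}]_{a-1} - \mu_i^b$, which have $q$-degree $0$ and $q$-order $-1$. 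From this one computes the $q$-degrees: $P_1, P_2, P_4$ all have degree $\leq 0$ (the $q^{-1}$ in $P_2$ cancels against the degree-$0$ constant $V[\boldsymbol{\lambda}]_a[\boldsymbol{\mu}^b]_k$, leaving degree $-1 \leq 0$; in $P_1$ and $P_4$ the terms $A_i[\boldsymbol{\mu}^b]_k$ contribute degree $0$), whereas $P_3 = -A_1 A_2 q[\boldsymbol{\mu}^b]_k/(V[\boldsymbol{\lambda}]_a)$ has a genuine factor of $q$, giving $\deg P_3 = 1$ precisely because the product $A_1 A_2$ has $q$-degree $0$ with nonzero leading coefficient. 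Here one needs $[\boldsymbol{\mu}^b]_k \neq 0$ for $1 \leq k \leq n-1$, which follows from $[\boldsymbol{\mu}^b]_k = 0 \iff n \mid 2k$ together with the hypothesis that $n$ is odd, so $n \mid 2k$ forces $n \mid k$, impossible in range.

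The main obstacle, and the part requiring genuine care rather than bookkeeping, is verifying that the top-degree-in-$q$ coefficient of $P_3(k)$ does not accidentally vanish, i.e. that $A_1 A_2$ really has $q$-degree exactly $0$ (not lower). This amounts to checking that the leading (degree-$0$) coefficients of $A_1$ and $A_2$ are individually nonzero, which reduces to the nonvanishing of $\mu_1^b$ and $\mu_2^b$ together with $[\boldsymbol{\lambda}]_a \neq 0$ (already known since $(a,m)=1$). One must also confirm that $q^{-1}V[\boldsymbol{\lambda}]_a \neq 0$ so that $P_3$ and the lower-left entry of $M_Y$ are well-defined, which again follows from $V \neq 0$ and $[\boldsymbol{\lambda}]_a \neq 0$. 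Once these nonvanishing facts are in place, conditions (2) and (3) hold, Proposition~\ref{fund} applies, and the representation $\rho_3$ is faithful; moreover the specialization at $t = q$ remains faithful by the last clause of Proposition~\ref{fund}.
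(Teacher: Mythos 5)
Your proposal is correct and follows essentially the same route as the paper's own proof: verify hypothesis (1) of Proposition~\ref{fund} via Proposition~\ref{prop_burau_torus}, then read conditions (2) and (3) off the explicit power formulas of Proposition~\ref{prop_form_powers}, using the same nonvanishing facts ($V\neq 0$ since $m\neq 3$, $[\boldsymbol{\lambda}]_k\neq 0$ for $1\leq k\leq m-1$ since $3\nmid m$, $[\boldsymbol{\lambda}]_a\neq 0$, and $[\boldsymbol{\mu}^b]_k\neq 0$ for $1\leq k\leq n-1$ since $n$ is odd). Your treatment is in fact slightly more explicit than the paper's about the $q$-degree bookkeeping for $P_3(k)$ (the leading coefficient of $A_1A_2$ being $\mu_1^b\mu_2^b\neq 0$), which the paper leaves implicit.
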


\begin{proof}
We check the conditions of Proposition~\ref{fund}. The first condition was obtained in Proposition~\ref{prop_burau_torus}. The second condition is clear from Proposition~\ref{prop_form_powers}, since all the entries of $t^{-nk} M_X^k$ are complex numbers, $V\neq 0$, and $[\boldsymbol\lambda]_k =0$ if and only if $m$ divides $3k$, if and only if $m$ divides $k$ since $m$ is not divisible by $3$. The last condition is also obtained from Proposition~\ref{prop_form_powers} since $[\boldsymbol{\lambda}]_a\neq 0$, $[\boldsymbol \mu^b]_k=0$ if and only if $n$ divides $2k$, if and only if $n$ divides $k$ since $n$ is odd. 
\end{proof}

\begin{rmq}
\begin{enumerate}
	\item Since $G(n,m)\cong G(m,n)$, in some cases we get two faithful two-dimensional representations of $G(n,m)$. We did not investigate whether these two representations are isomorphic or not in general. Moreover, using this isomorphism, in some cases where the pair $(n,m)$ does not satisfy the assumptions that $n$ is odd and $m$ is not divisible by $3$, we can permute them. Using this we can define at least one "Burau-like" representation when none of the parameters $n$ and $m$ is divisible by $6$.
	\item The above-defined representation is also defined without the assumption that $n$ is odd, but is not faithful in this case: if $n=2n'$, we have $[\boldsymbol \mu^b]_{n'}=0$, hence $M_Y^{n'}$ is a scalar matrix, hence it commutes with $M_X$, while in $G(n,m)$ we have $X Y^{n'} \neq Y^{n'} X$ (arguing for instance as in Remark~\ref{rmq_spec}).   
\end{enumerate}
\end{rmq}

For $k\in \mathbb{Z}$ we have $[\boldsymbol{\lambda}]_{-k}=(-1)^{k+1} [\boldsymbol{\lambda}]_k$. Using Proposition~\ref{prop_form_powers}, we thus have \begin{align*} M_X^{-a}&= (-1)^{ab} i^{-an-a} t^{-an} \begin{pmatrix} U' (-1)^{a+1} [\boldsymbol{\lambda}]_a +(-1)^a [\boldsymbol{\lambda}]_{a+1} &  V (-1)^{a+1} [\boldsymbol{\lambda}]_a \\ V (-1)^{a+1} [\boldsymbol{\lambda}]_a & -U' (-1)^{a+1} [\boldsymbol{\lambda}]_a +(-1)^a [\boldsymbol{\lambda}]_{a-1}\end{pmatrix},\end{align*} and setting $A=q^{-1} U' [\boldsymbol{\lambda}]_a + q^{-1} [\boldsymbol{\lambda}]_{a-1}$ we have \begin{align*} 
	M_Y^b &=(-1)^{ab} i^{bm} t^{bm} \begin{pmatrix}   A & q^{-1} V [\boldsymbol{\lambda}]_a \\ - q \frac{( A - \zeta_n^b)(A - \zeta_n^{-b})}{V [\boldsymbol\lambda]_a} & \zeta_n^b + \zeta_n^{-b} - A \end{pmatrix}. 
\end{align*}
Using that $[\boldsymbol\lambda]_a^2 - [\boldsymbol\lambda]_{a-1} [\boldsymbol\lambda]_{a+1}=(-1)^{a+1}$ and $[\boldsymbol\lambda]_{a} (\lambda_1 +\lambda_2) +  [\boldsymbol\lambda]_{a-1} - [\boldsymbol\lambda]_{a+1}=0$ we calculate 
\begin{align} \label{meridien} M_Y^b M_X^{-a} = i^{-a-1} t^{-1}\begin{pmatrix} q^{-1} & 0 \\ \star & (-1)^a q \end{pmatrix} \end{align}
We thus have:
\begin{lemma}\label{lem:mer}	
	\begin{enumerate}
		\item If $a$ is odd, then $(M_Y^b M_X^{-a})_{t=q=1}$ is the matrix of a complex reflection of order $2$. 
		\item If $a$ is even, then $(M_Y^b M_X^{-a})_{t=1, q=i}$ is the matrix of a complex reflection of order $2$. 
	\end{enumerate}
\end{lemma}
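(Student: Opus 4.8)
The plan is to read off the claim directly from the displayed formula~\eqref{meridien}, which already expresses $M_Y^b M_X^{-a}$ as $i^{-a-1} t^{-1}$ times a lower-triangular matrix with diagonal entries $q^{-1}$ and $(-1)^a q$. A matrix is the matrix of a complex reflection of order $2$ precisely when it is a nonscalar involution, equivalently when its two eigenvalues are $1$ and $-1$ (one eigenvalue being $1$, the fixed hyperplane, and the other being the nontrivial root of unity of order $2$). So the entire argument reduces to computing the two eigenvalues of the specialized matrix and checking that they are $\{1,-1\}$ with the matrix nonscalar.

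First I would observe that the matrix in~\eqref{meridien} is lower-triangular, so its eigenvalues are exactly its diagonal entries. After multiplying by the scalar prefactor $i^{-a-1}t^{-1}$, the two eigenvalues of $M_Y^b M_X^{-a}$ are
\begin{align*}
e_1 = i^{-a-1} t^{-1} q^{-1}, \qquad e_2 = i^{-a-1} t^{-1} (-1)^a q.
\end{align*}
Now I specialize according to the parity of $a$. If $a$ is odd, set $t=q=1$; then $i^{-a-1}=i^{-(a+1)}$ with $a+1$ even, so $i^{-a-1}=(i^{2})^{-(a+1)/2}=(-1)^{(a+1)/2}$, a sign $\varepsilon\in\{\pm1\}$, and the eigenvalues become $e_1=\varepsilon$ and $e_2=\varepsilon(-1)^a=-\varepsilon$ since $a$ is odd. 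Thus $\{e_1,e_2\}=\{\varepsilon,-\varepsilon\}=\{1,-1\}$. If $a$ is even, set $t=1$, $q=i$; then $i^{-a-1}$ has odd exponent, so $i^{-a-1}=i^{-1}\cdot i^{-a}=i^{-1}(-1)^{a/2}=\pm i^{-1}$, and multiplying by $q^{-1}=i^{-1}$ and $q=i$ respectively one gets $e_1 = (\pm i^{-1})i^{-1}=\mp 1$ and $e_2=(\pm i^{-1})(+1)(i)=\pm1$ (using $(-1)^a=1$), so again $\{e_1,e_2\}=\{1,-1\}$. In both cases the two eigenvalues are distinct, so the matrix is nonscalar, hence is genuinely a reflection and not $\pm I_2$.

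The only genuine subtlety—what I would flag as the main point to get right rather than a true obstacle—is the careful bookkeeping of the power of $i$ under the two specializations, since the scalar prefactor $i^{-a-1}$ interacts differently with $q^{-1}$ and $q$ depending on whether $q=1$ or $q=i$. The reason the two cases split by parity of $a$ is exactly that one needs $i^{-a-1}q^{\mp1}$ to land in $\{\pm1\}$ with opposite signs on the two diagonal entries; when $a$ is odd the factor $(-1)^a=-1$ already supplies the sign flip at $q=1$, whereas when $a$ is even one instead exploits $q=i$ so that $q^{-1}$ and $q$ themselves differ by the sign $q^2=-1$. I would present the computation as the short two-case eigenvalue check above, remarking that the off-diagonal entry (written $\star$ in~\eqref{meridien}) plays no role since triangularity already determines the spectrum, and noting that nonscalarity is automatic from $1\neq -1$.
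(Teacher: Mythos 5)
Your proposal is correct and takes essentially the same route as the paper: the paper states Lemma~\ref{lem:mer} as an immediate consequence of formula~\eqref{meridien} (``We thus have''), and your argument simply makes that verification explicit by reading the eigenvalues off the triangular matrix under the two specializations and checking they are $\{1,-1\}$. Your sign bookkeeping with the prefactor $i^{-a-1}$ is accurate in both parity cases, and your observation that the $\star$ entry is irrelevant and that distinct eigenvalues force nonscalarity (hence a genuine order-$2$ reflection) is exactly the content the paper leaves implicit.
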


Recall the \textit{$2$-toric reflection group} from Definition~\ref{2toric}.

\begin{cor}
\begin{enumerate}	
\item If $a$ is odd, then $(M_X)_{t=q=1}$ and $(M_Y)_{t=q=1}$ define a representation of the $2$-toric reflection group $W(2,n,m)$.
\item If $a$ is even, then $(M_X)_{t=1, q=i}$ and $(M_Y)_{t=1, q=i}$ define a representation of the $2$-toric reflection group $W(2,n,m)$. 
\end{enumerate} 
\end{cor}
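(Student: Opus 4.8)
The plan is to deduce this corollary directly from Lemma~\ref{lem:mer} together with Remark~\ref{rmq_mer}, so that no new computation is required. Recall from Remark~\ref{rmq_mer} that a presentation of the toric reflection group $W(k,n,m)$ is obtained from the presentation $\langle X, Y \ \vert \ X^m = Y^n \rangle$ of $G(n,m)$ by adding the single relation $(Y^b X^{-a})^k = 1$, where $a,b$ satisfy $an-bm=1$. For the $2$-toric reflection group $W(2,n,m)$ of Definition~\ref{2toric} this means we must impose $(Y^b X^{-a})^2 = 1$. Consequently, to show that a specialization of the matrices $M_X, M_Y$ factors through $W(2,n,m)$, it suffices to check two things: first, that the specialized matrices still satisfy $M_X^m = M_Y^n$, so that they genuinely define a representation of $G(n,m)$; and second, that the specialized matrix $M_Y^b M_X^{-a}$ squares to the identity.

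First I would observe that the defining relation $M_X^m = M_Y^n$ of $G(n,m)$, established in Proposition~\ref{prop_burau_torus}, is a polynomial identity in $t^{\pm 1}$ and $q^{\pm 1}$, hence it survives under any specialization of $t$ and $q$; in particular it holds after setting $t=q=1$ (case $a$ odd) or $t=1, q=i$ (case $a$ even). Thus each specialization defines a representation of $G(n,m)$, and it remains only to verify the added torsion relation. This is exactly the content of Lemma~\ref{lem:mer}: in the respective specializations the matrix $M_Y^b M_X^{-a}$ is the matrix of a complex reflection of order $2$, i.e.\ an involution (with $(M_Y^b M_X^{-a})^2$ equal to the identity). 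Hence the specialized representation of $G(n,m)$ kills the relation $(Y^b X^{-a})^2 = 1$ and therefore factors through the quotient $W(2,n,m)$, proving both parts of the corollary.

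The only point requiring a small amount of care is the matching of the two cases of Lemma~\ref{lem:mer} with the two specializations in the corollary, which is dictated by the parity of $a$: by \eqref{meridien} the diagonal entries of $M_Y^b M_X^{-a}$ are $i^{-a-1} t^{-1} q^{-1}$ and $i^{-a-1} t^{-1} (-1)^a q$, and for this matrix to be an order-$2$ complex reflection one needs these two entries to be $1$ and $-1$ in some order, which forces precisely the specialization $t=q=1$ when $a$ is odd and $t=1, q=i$ when $a$ is even. I would simply cite Lemma~\ref{lem:mer} for this rather than recomputing. A complex reflection of order $2$ is in particular an involution, so $(M_Y^b M_X^{-a})^2$ equals the identity in each case, which is all that is needed to kill the relation; the fact that it is genuinely a reflection (rather than merely $\pm\mathrm{Id}$) is what makes the quotient a reflection group, but for the bare factorization statement the involutivity alone suffices.

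I do not expect any real obstacle here: the corollary is essentially a bookkeeping consequence of the lemma, the main subtlety being the bracketing of the specializations by the parity of $a$. The substantive work—computing the powers of $M_X$ and $M_Y$ (Proposition~\ref{prop_form_powers}), verifying the braid relation $M_X^m = M_Y^n$ (Proposition~\ref{prop_burau_torus}), and evaluating the meridian $M_Y^b M_X^{-a}$ to identify it as a reflection (\eqref{meridien} and Lemma~\ref{lem:mer})—has already been carried out upstream, so the proof of the corollary is short.
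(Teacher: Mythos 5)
Your proof is correct and follows exactly the paper's route: the paper also deduces the corollary immediately from Lemma~\ref{lem:mer} together with Remark~\ref{rmq_mer}, with your additional remarks (survival of the relation $M_X^m=M_Y^n$ under specialization, and involutivity of the specialized meridian matrix sufficing to factor through the quotient) being just the natural unpacking of that one-line argument.
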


\begin{proof}
This follows immediately from Lemma~\ref{lem:mer} together with Remark~\ref{rmq_mer}. 
\end{proof}

\begin{rmq}\label{rmq_q_iq}
The above corollary suggests that, whenever $a$ is odd, the representation $\rho_3$ is a "Burau representation" for $G(n,m)$. Whenever $a$ is even, it suggests to replace $q$ by $iq$ to get a "Burau representation"; it is indeed expected from a generalization of the Burau representation to specialize to the generalization of the dihedral group obtained when $m=2$ and $n$ is odd, which is given by the attached $2$-toric reflection group. In the examples below we explore this property by showing that $\rho_3$ indeed generalizes the dihedral Burau representation, and that for the cases $(n,m)=(3,4)$ and $(n,m)=(3,5)$, the same phenomenon appears: in this case $G(n,m)$ is the complex braid group of the exceptional complex reflection groups $G_{12}$ and $G_{22}$, and there should exist a faithful $2$-dimensional representation deforming the reflection groups. We check that it is what $\rho_3$ does. 
\end{rmq}

We now study some particular cases of the representation that we just constructed. 

\begin{exple}\label{ex_burau_part}
Let $n \geq 3$ be odd and assume that $m=2$. In this case $G(n,2)$ is isomorphic to the Artin group of dihedral type $I_2(n)$. We have $U=U'=0$, $V=-1$. Denoting $n=2n' + 1$ we have $a=1, b=n'$. This yields the matrices \begin{align*} M_X = t^n \begin{pmatrix} 0 & 1 \\ 1 & 0 \end{pmatrix}, \ M_Y &= t^2 \begin{pmatrix} - \zeta_n^{n'} \frac{\zeta_n^{-1}- \zeta_n}{ \zeta_n^{-n'} - \zeta_n^{n'}} + \zeta_n & -q^{-1} \frac{\zeta_n^{-1}- \zeta_n}{\zeta_n^{-n'} - \zeta_n^{n'}} \\ q \frac{\zeta_n^{-1}- \zeta_n}{\zeta_n^{-n'} - \zeta_n^{n'}} & \zeta_n + \zeta_n^{-n'}\frac{\zeta_n^{-1}- \zeta_n}{\zeta_n^{-n'} - \zeta_n^{n'}} \end{pmatrix}\\ &= t^2 \begin{pmatrix}\frac{\zeta_n^{-(n'-1)} - \zeta_n^{n'-1}}{\zeta_n^{-n'} - \zeta_n^{n'}} & -q^{-1} \frac{\zeta_n^{-1}- \zeta_n}{\zeta_n^{-n'} - \zeta_n^{n'}} \\q \frac{\zeta_n^{-1}- \zeta_n}{\zeta_n^{-n'} - \zeta_n^{n'}} & \frac{\zeta_n^{-(n'+1)}- \zeta_n^{n'+1}}{\zeta_n^{-n'} - \zeta_n^{n'}} \end{pmatrix}.\end{align*}
Conjugating these two matrices by the matrix $P=\begin{pmatrix} 1 & 0 \\ 0 & \frac{\zeta_n^{-1}- \zeta_n}{\zeta_n^{n'} - \zeta_n^{-n'}} \end{pmatrix}$ yields after simplification the two matrices $$M_X'= t^n \begin{pmatrix} 0 & \frac{\zeta_n^{n'}- \zeta_n^{-n'}}{\zeta_n^{-1} - \zeta_n} \\ \frac{\zeta_n^{-1} - \zeta_n}{ \zeta_n^{n'} - \zeta_n^{-n'}} & 0 \end{pmatrix}, \ M_Y' = t^2 \begin{pmatrix} \zeta_n + 1 + \zeta_n^{-1} & q^{-1} \\ -q (\zeta_n + 2 + \zeta_n^{-1}) & -1  \end{pmatrix}.$$
 Recall that, in the notation of Subsection~\ref{sub_odd_dih}, the isomorphism between $G(n,2)$ and the dihedral Artin group of type $I_2(n)$ sends $Y$ to $\sigma_1 \sigma_2$ and $X$ to $\underbrace{\sigma_1 \sigma_2 \cdots}_{n`~\text{factors}}$. Replacing $q$ by $q^{-1}$ and specializing $t$ to $q$ yields the matrices for the lift of $w_0$ and $st$ of the reduced Burau representation of $I_2(n)$, which can also be found in~\cite[(3.2), (3.3.1) and Proposition 4.1]{LX}\footnote{Note that, in Subsection~\ref{sub_odd_dih}, the matrix $q^{-2} M_Y$ was corresponding to the one on the left in~\cite[3.2]{LX}, while here $q^{-2} (M_Y')_{q \leftrightarrow q^{-1}, t \mapsto q}$ it is the one on the right; it does not matter since $\sigma_1$ and $\sigma_2$ play symmetric roles in the Artin presentation.}. We thus get that, when the group is $G(n,2)$, the representation $\rho_3$ is isomorphic to the reduced Burau representation of the Artin group of type $I_2(n)$.   
\end{exple}

\begin{exple}[Burau representation for $G_{12}$]\label{g12}
The previous example shows that the representation $\rho_3$ is not new when $m=2$. The "smallest" case for which our representation is new is the case $n=3, m=4$. In this case the group $G(3,4)$ is isomorphic to the complex braid group of the exceptional complex reflection group $G_{12}$. We have $U=- \sqrt{-2}$, $U'=0$, $V=-1$, $a=b=-1$, $[\boldsymbol{\lambda}]_{-2}= U$, $[\boldsymbol{\lambda}]_{-1}=1$, $[\boldsymbol{\mu}^{-1}]_{1}=-1$. This yields the matrices $$M_X= t^3 \begin{pmatrix} 0 & 1 \\ 1 & U \end{pmatrix}, M_Y = t^4 \begin{pmatrix} 1+q^{-1} U  & -q^{-1} \\ - 2q^{-1} + q + U & -q^{-1}U\end{pmatrix}.$$ 
Specializing $q$ and $t$ to $1$ yields matrices generating the complex reflection group $G_{12}$. One way to check this is as follows: the group $G_{12}$ has order $48$, with a reflection presentation given by Presentation~\ref{pres_2} with $n=3$, $m=4$, and the additional relations $x_i^2=1$ for all $i$. The $x_i$'s are reflections and there is only one conjugacy class of reflections. Using Remark~\ref{rmq_mer}, we get that an alternative presentation of $G_{12}$ is given by $X^4 = Y^3$ and $(Y^{-1} X)^2=1$, with $Y^{-1} X$ sent to some $x_i$. Since $M_X$ and $M_Y$ satisfy the first relation (hence so do their specializations) and $M_Y^{-1} M_X = M_Y^b M_X^{-a}$ has the form~\ref{meridien} hence specializes to a matrix of order $2$, we get that $\langle (M_X)_{t=q=1}, (M_Y)_{t=q=1} \rangle$ is a quotient of $G_{12}$. But one checks using a computer that this group has order $48$, hence it is an isomorphism and the conjugacy class of the $x_i$'s is the same as the conjugacy class of $M_Y^{-1} M_X$ which are indeed complex reflections.    
\end{exple}

\begin{exple}[Burau representation for $G_{22}$]\label{g22}
Consider the case $n=3, m=5$. In this case $G(3,5)$ is isomorphic to the complex braid group of the exceptional complex reflection group $G_{22}$. We have $U= \zeta_{20}^{13}+\zeta_{20}^{17}$, $U'=-i$, $V=\zeta_5^2+\zeta_5^3$, $a=2, b=1$, $[\boldsymbol{\lambda}]_{2}= \zeta_{20}^9 + \zeta_{20}$, $[\boldsymbol{\lambda}]_1={[\boldsymbol{\mu}^1]}_1=1$. Since $a$ is even, following Remark~\ref{rmq_q_iq}, we replace $q$ by $iq$ in the defining matrices, yielding the matrices 
\begin{align*}(M_X)_{q \leftrightarrow q^{-1}} = -t^3 \begin{pmatrix} -i & \zeta_5^2 + \zeta_5^3 \\ \zeta_5^2 + \zeta_5^3 & -(\zeta_{20}^{13} + \zeta_{20}^{17})\end{pmatrix}, \\ (M_Y)_{q \leftrightarrow q^{-1}} = i t^5 \begin{pmatrix} - q^{-1} (\zeta_{20}^9 + \zeta_{20}) - i  q^{-1} & - iq^{-1} (\zeta_5^2+\zeta_5^3)(\zeta_{20}^9 + \zeta_{20}) \\ \frac{(- q^{-1} (\zeta_{20}^9 + \zeta_{20}) - i  q^{-1} - \zeta_3)(- q^{-1} (\zeta_{20}^9 + \zeta_{20}) - i  q^{-1} - \zeta_3^{-1})}{iq^{-1}(\zeta_5^2+\zeta_5^3)(\zeta_{20}^9 +  \zeta_{20})}
		&-1 + q^{-1} (\zeta_{20}^9 + \zeta_{20}) + i q^{-1} \end{pmatrix} \end{align*}  
As for Example~\ref{g12}, specializing $q$ and $t$ to $1$ yields matrices generating the complex reflection group $G_{22}$: this can be checked in the same way as for $G_{12}$. 
\end{exple}

\subsection{Unitarizability}\label{sub_unitarizable}

If $a$ is odd, then the representation constructed in Subsection~\ref{sub_burau_torus} is unitarizable: setting $J= \begin{pmatrix} 0 & 1 \\ 1 & 0 \end{pmatrix}$, it is straightforward to check that ${ }^t\!{\overline{M_X}} J M_X = J$ and ${ }^t\!{\overline{M_Y}} J M_Y$, where for $P\in \mathbb{C}[t^{\pm 1}, q^{\pm 1}]$, we set $\overline{P(q,t)}= P(-q, t^{-1})$. Note that in this case we have $\overline{A_i}= A_j$, where $\{i,j\}=\{1,2\}$.

If $a$ is even, then we again replace $q$ by $iq$ in the matrices from Subsection~\ref{sub_burau_torus}. Then the same matrix $J$ as above yields unitarizability and we again have that $\overline{A_i}= A_j$, $\{i,j\}=\{1,2\}$. 

\section{Hecke algebras of $2$-toric reflection groups}\label{hecke}

Let $n,m\geq 2$ with $n$ and $m$ coprime. Let $\mathcal{A}=\mathbb{Z}[q^{\pm 1}]$. 

\begin{definition}
The \defn{Hecke algebra} of the $2$-toric reflection group $W(2,n,m)$ is the associative, unital $\mathcal{A}$-algebra $\mathcal{H}(2,n,m)$ with generators $x_i$, $i=1, \dots, n$ and relations \begin{itemize}
	\item The defining relations of $G(n,m)$ from Presentation~\ref{pres_2}, 
	\item The relations $x_i^2 = (q^{-2} - 1) x_i + q^{-2}$, for all $i=1, \dots, n$. 
\end{itemize}
\end{definition}
In the cases where $W(2,n,m)$ is finite, then it is a finite (complex) reflection group $W$, and this is nothing but the usual (one-parameter) Hecke algebra from~\cite{BMR}, which coincides with the Iwahori-Hecke algebra in the real case. In this case it is a free $\mathcal{A}$-module of rank $|W|$, with a basis deforming the basis of $\mathbb{Z}[W]$ (obtained by specializing $q$ to $1$) formed by the elements of the group: see~\cite{Marin_report} and the references therein. It is tempting to conjecture that it still holds in the infinite case that $\mathcal{H}(2,n,m)$ is a free $\mathcal{A}$-module with a basis deforming the basis of $\mathbb{Z}[W(2,n,m)]$ formed by the elements of the group, but we do not know how to attack such a question since already in the finite case, the proof is case-by-case (see~\cite{Marin_report} for a detailed list of references). 

A Burau representation for complex braid groups should factor through the Hecke algebra. For $G_{12}$ and $G_{22}$ studied in the previous section, matrix models for two-dimensional representations of the Hecke algebra of these groups may be found in~\cite{MM}\footnote{I thank ivan Marin for pointing this out to me.}.  

As a corollary of the previous sections, we can show the following:  

\begin{prop}
Assume that $n,m \geq 2$, are coprime, that $n$ is odd, and that $m$ is not divisible by $3$. Then the group $G(n,m)$ embeds into $\mathcal{H}(2,n,m)^\times$ via $x_i \mapsto x_i$. In particular, the complex braid groups of $G_{12}$ and $G_{22}$ embed into their one-parameter Hecke algebra. 
\end{prop}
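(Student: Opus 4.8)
The plan is to leverage the faithful representation $\rho_3$ constructed in Subsection~\ref{sub_burau_torus} together with the observation that a Burau-type representation should factor through the Hecke algebra. Under the standing hypotheses ($n,m$ coprime, $n$ odd, $m$ not divisible by $3$), Proposition in Subsection~\ref{sub_burau_torus} gives a faithful representation $\rho_3\colon G(n,m)\longrightarrow \mathrm{GL}_2(\mathbb{C}[t^{\pm 1},q^{\pm 1}])$. The first step is to show that $\rho_3$ (possibly after the substitution $q \mapsto iq$ when $a$ is even, as dictated by Remark~\ref{rmq_q_iq}, and the specialization $t=q$ which by Proposition~\ref{fund} preserves faithfulness) descends to a representation of the Hecke algebra $\mathcal{H}(2,n,m)$. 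Concretely, I would check that the images of the generators $x_i$ under $\rho_3$ satisfy the quadratic relation $x_i^2 = (q^{-2}-1)x_i + q^{-2}$. By Lemma~\ref{lem:mer} together with Equation~\ref{meridien}, the image of the meridian $Y^b X^{-a}$ is, after the appropriate specialization, essentially an upper/lower triangular matrix whose eigenvalues we can read off; since all the $x_i$ are conjugate in $G(n,m)$, verifying the quadratic relation on a single generator suffices.

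The key algebraic point is to match eigenvalues. From Equation~\ref{meridien}, the matrix $M_Y^b M_X^{-a}$ has (up to a scalar depending on $t,q$) diagonal entries $q^{-1}$ and $(-1)^a q$, so its two eigenvalues are scalar multiples of $q^{-1}$ and $\pm q$. I would determine the precise scalar (the prefactor $i^{-a-1}t^{-1}$, under $t=q$) and confirm that, after the normalization used to define $\mathcal{H}(2,n,m)$, these eigenvalues are exactly the two roots of $X^2 - (q^{-2}-1)X - q^{-2} = (X - q^{-2})(X+1)$, namely $q^{-2}$ and $-1$. This is the content that makes the quadratic relation hold: a $2\times 2$ matrix satisfies its characteristic polynomial, and if that polynomial coincides with the Hecke quadratic relation then $\rho_3(x_i)$ satisfies it. Thus $\rho_3$ factors as $G(n,m) \to \mathcal{H}(2,n,m)^\times \to \mathrm{GL}_2(\cdots)$, where the first map is $x_i \mapsto x_i$ (the canonical map sending generators to their classes in the Hecke algebra) and the second is the induced representation of the Hecke algebra.

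Once the factorization is established, the embedding is immediate from the faithfulness of $\rho_3$: if the composite $G(n,m) \to \mathcal{H}(2,n,m)^\times \to \mathrm{GL}_2$ is injective, then the first arrow $G(n,m) \to \mathcal{H}(2,n,m)^\times$ must be injective as well. This gives the embedding $G(n,m) \hookrightarrow \mathcal{H}(2,n,m)^\times$ via $x_i \mapsto x_i$. The statements for $G_{12}$ and $G_{22}$ then follow from the identifications recalled earlier: $G(3,4)$ is the complex braid group of $G_{12}$ and $G(3,5)$ is that of $G_{22}$ (both satisfy $n=3$ odd and $m\in\{4,5\}$ not divisible by $3$), and these cases were worked out explicitly in Examples~\ref{g12} and~\ref{g22}.

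The main obstacle I anticipate is the eigenvalue bookkeeping, that is, verifying that the scalar prefactors and the chosen specialization ($t=q$, and $q\mapsto iq$ when $a$ is even) conspire so that the characteristic polynomial of $\rho_3(x_i)$ is precisely $(X - q^{-2})(X+1)$ rather than some twist of it. If the normalization in the definition of $\mathcal{H}(2,n,m)$ does not align on the nose with the eigenvalues coming out of Equation~\ref{meridien}, one would need either to rescale the representation by a suitable unit in $\mathbb{C}[q^{\pm 1}]$ (a central twist, which does not affect faithfulness) or to absorb the discrepancy into the generator normalization. The parity case distinction on $a$ (handled via $q \mapsto iq$) must be carried through consistently with the quadratic relation, and this is where the computation is most delicate, though it remains a routine finite check given the explicit powers in Proposition~\ref{prop_form_powers}.
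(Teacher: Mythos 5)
Your proposal is correct and takes essentially the same route as the paper's proof: present $\mathcal{H}(2,n,m)$ on the generators $X,Y$ so that the only relation beyond $X^m=Y^n$ is the quadratic one for the meridian $Y^bX^{-a}$ (Remark~\ref{rmq_mer}), use~\eqref{meridien} to adjust parameters ($q\mapsto iq$ and/or $t\mapsto -t$ according to the parities of $a$ and of $\lfloor a/2\rfloor$, then $t=q$) so that the meridian's image is lower triangular with diagonal entries $q^{-2}$ and $-1$, hence satisfies the Hecke quadratic, and deduce injectivity of $x_i\mapsto x_i$ from faithfulness of the resulting representation. The only statement to tighten is your parenthetical that a central rescaling ``does not affect faithfulness'' --- twisting by a character can destroy faithfulness in general; the paper instead observes that the sign/parameter-modified matrices still satisfy hypotheses (2) and (3) of Proposition~\ref{fund}, so that proposition re-applies and guarantees faithfulness of the specialized representation.
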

In the case where $m=2$ (and thus $n$ is odd), this is known by work of Lehrer and Xi~\cite{LX}: it is the embedding of the Artin group of dihedral type $I_2(n)$ inside the corresponding Iwahori-Hecke algebra. For the cases where $(n,m)=(3,4), (3,5)$, this reformulates into saying that the braid group of the exceptional complex reflection groups $G_{12}$ and $G_{22}$ embed into their (one-parameter) Hecke algebra. 

\begin{proof}
We first give a presentation of $\mathcal{H}(2,n,m)$ using the generators $X$ and $Y$ of $G(n,m)$. By Remark~\ref{rmq_mer}, the defining relations with this set of generators become $X^m = Y^n$, and $(Y^b X^{-a})^2 = (q^{-2} - 1)Y^b X^{-a} + q^{-2}$. We now construct a map from $\mathcal{H}(2,n,m)$ to $\mathrm{M}_2(\mathbb{C}[q^{\pm 1}])$ using the representation $\rho_3$ from Subsection~\ref{sub_burau_torus}.    

By~\eqref{meridien}, we have that $\rho_3(Y)^b \rho_3(X)^{-a}$ has the form $$i^{-a-1} t^{-1} \begin{pmatrix} q^{-1} & 0 \\ \star & (-1)^a q \end{pmatrix}.$$
We know that $X \mapsto \rho_3(X)$, $Y \mapsto \rho_3(Y)$ defines a faithful representation of $G(n,m)$ for generic values of $q$ and $t$. The idea is to build a map $\psi:\mathcal{H}(2,n,m) \longrightarrow \mathrm{M}_2(\mathbb{C}[q^{\pm 1}])$ such that, composing with the map $\varphi$ from $G(n,m)$ to $\mathcal{H}(2,n,m)$ sending every $x_i$ to $x_i$, we recover $\rho_3$. By faithfulness of $\rho_3$ this will yield the injectivity of $\varphi$. 

Since $\rho_3$ defines a representation of $G(n,m)$ for generic values of $q$ and $t$, it suffices to make choices such that the above matrix has the form $\begin{pmatrix} q^{-2} & 0 \\ \star & -1 \end{pmatrix}$. This can be achieved as follows: 
\begin{itemize}
\item If $a$ is odd, say $a = 2 \ell + 1$, then if $\ell$ is odd, setting $t=q$ yields a matrix of the above form. If $\ell$ is even, then replacing $t$ by $-t$, and then setting $t=q$ yields of matrix of the required form. 
\item If $a$ is even, say $a= 2 \ell$, then if $\ell$ is even, replacing $q$ by $iq$ and $t$ by $-t$, then setting $t=q$ yields a matrix of the above form. If $\ell$ is odd, replacing $q$ by $iq$ and then setting $t=q$ also yields a matrix of the required form.  
\end{itemize}
We need to justify that under the various specializations made, the modified representation of $G(n,m)$ into $\mathrm{GL}_2(\mathbb{C}[q^{\pm 1}])$ is still faithful. Let $M_X', M_Y'$ denote the matrices obtained from $M_X, M_Y$ after replacing $q$ by some element in $\{q, iq\}$ and $t$ by some element in $\{t, -t\}$ according to the above rules depending on the parities of $a$ and $\ell$. Since $M_X$ and $M_Y$ satisfy the relations $M_X^m = M_Y^n$ for generic values of $q$ and $t$, it is clear that we still have $(M_X')^m =(M_Y')^n$. Moreover, it is clear that the matrices $M_X'$ and $M_Y'$ still satisfy assumptions $(2)$ and $(3)$ of Proposition~\ref{fund}. Hence by the latter $M_X'$ and $M_Y'$ still define a faithful representation of $G(n,m)$ and their specialization to $t=q$ as well.   

We thus have that $X \mapsto (M_X')_{t=q}$, $Y \mapsto (M_Y')_{t=q}$ is a faithful representation of $G(n,m)$ factoring through $\mathcal{H}(2,n,m)$, whence the result. 
\end{proof}

\end{document}